\documentclass[draftclsnofoot, onecolumn, 12pt]{IEEEtran}

\usepackage{extarrows,paralist}
\usepackage{mhchem} 
\usepackage{siunitx} 
\usepackage{graphicx} 
\usepackage{amsmath,graphicx}
\usepackage{mathtools}

\usepackage{amsthm}
\newtheorem{lem}{Lemma}
\newtheorem{theorem}{Theorem}

\newtheorem{assump}{Assumption}

\usepackage[square, comma, sort&compress, numbers]{natbib}
\usepackage{mathtools}
\usepackage{graphicx}
\usepackage{float}
\usepackage{caption}
\usepackage{color}
\usepackage{placeins}
\usepackage{float}
\usepackage{tabularx,colortbl}
\usepackage[skip=2pt,font=footnotesize]{caption}
\usepackage{amsmath,subfigure}
\usepackage{listings}
\usepackage[outdir=./]{epstopdf}
\usepackage{amssymb}
\usepackage{multirow}
\usepackage{algorithm}
\usepackage{algpseudocode}
\usepackage{hyperref}

\usepackage{enumitem}
\allowdisplaybreaks
\usepackage[dvipsnames]{xcolor}

\makeatletter
\newcommand{\vast}{\bBigg@{4}}
\newcommand{\Vast}{\bBigg@{5}}
\makeatother

\def\ln{{\rm ln}}

\def\mc{\mathcal}
\def\mb{\mathbf}
\def\mbb{\mathbb}

\def\wh{\widehat}

\def\ul{\underline}
\def\bs{\boldsymbol}
\def\ol{\overline}
\newcommand{\mn}[1]{{\left\vert\kern-0.25ex\left\vert\kern-0.25ex\left\vert\kern0.3ex #1 
		\kern0.3ex\right\vert\kern-0.25ex\right\vert\kern-0.25ex\right\vert}}

\begin{document}
\title{\Large \textbf{FROST -- Fast row-stochastic optimization\\ with uncoordinated step-sizes}}
\author{Ran Xin,~\emph{Student Member,~IEEE}, Chenguang Xi,~\emph{Member,~IEEE},\\ and Usman A. Khan,~\emph{Senior Member,~IEEE}
\thanks{
The authors are with the ECE Department at Tufts University, Medford, MA; {\texttt{ran.xin@tufts.edu, khan@ece.tufts.edu}}. This work has been partially supported by an NSF Career Award \# CCF-1350264.}
}
\maketitle

\begin{abstract} 
In this paper, we discuss distributed optimization over directed graphs, where doubly-stochastic weights cannot be constructed. Most of the existing algorithms overcome this issue by applying push-sum consensus, which utilizes column-stochastic weights. {\color{black}The formulation of column-stochastic weights requires} each agent to know (at least) its out-degree, which may be impractical in e.g., broadcast-based communication protocols. In contrast, we describe FROST (Fast Row-stochastic-Optimization with uncoordinated STep-sizes), an optimization algorithm applicable to directed graphs that does not require the knowledge of out-degrees; the implementation of which is straightforward as each agent locally assigns weights to the incoming information and locally chooses a suitable step-size. We show that FROST converges linearly to the optimal solution for smooth and strongly-convex functions given that the largest step-size is positive and sufficiently small.
\end{abstract}

\begin{IEEEkeywords}
Distributed optimization, directed graphs, multi-agent systems, linear convergence
\end{IEEEkeywords}

\section{Introduction}\label{s1}
In this paper, we study distributed optimization, where~$n$ agents are tasked to solve the following problem:  
$$
\min_{\mb{x}\in\mathbb{R}^n}F(\mb{x}) \triangleq \frac{1}{n}\sum_{i=1}^{n}f_i(\mb{x}),
$$
where each objective,~$f_i:\mathbb{R}^p\rightarrow\mathbb{R}$, is private and known only to agent~$i$. The goal of the agents is to find {\color{black}the global minimizer} of the aggregate cost,~$F(\mb{x})$, via local communication with their neighbors and without revealing their private objective functions. This formulation has recently received great attention due to its extensive applications in e.g., machine learning~\cite{forero2010consensus,distributed_Boyd,raja2016cloud,wai2018multi,di2013sparse}, control~\cite{jadbabaie2003coordination}, cognitive networks,~\cite{distributed_Mateos,distributed_Bazerque}, and source localization~\cite{distributed_Rabbit,safavi2018distributed}.

Early work on this topic includes Distributed Gradient Descent (DGD)~\cite{DOPT1,uc_Nedic}, which is computationally simple but is slow due to a diminishing step-size. The convergence rates are~$\mc{O}(\frac{\log k}{\sqrt{k}})$ for general convex functions and~$\mc{O}(\frac{\log k}{k})$ for strongly-convex functions, where~$k$ is the number of iterations. With a constant step-size, DGD converges faster albeit to an inexact solution~\cite{DGD_Yuan,balancing}. Related work also includes methods based on the Lagrangian dual~\cite{dual_Terelius,ADMM_Mota,ADMM_Wei,ADMM_Shi}to achieve faster convergence, albeit at the expense of more computation. To achieve both fast convergence and computational simplicity, some fast distributed first-order methods have been proposed. A Nesterov-type approach~\cite{fast_Gradient} achieves~$\mc{O}(\frac{\log k}{k^2})$ for smooth convex functions with bounded gradient assumption. EXTRA~\cite{EXTRA} exploits the difference of two consecutive DGD iterates to achieves a linear convergence to the optimal solution. Exact Diffusion~\cite{exactdiffusion1,exactdiffusion2} applies an Adapt-then-Combine structure~\cite{diffusion} to EXTRA and generalizes the symmetric doubly-stochastic weights required in EXTRA to locally-balanced row-stochastic weights over undirected graphs. {\color{black}Of significant relevance to this paper is a distributed gradient tracking technique built on dynamic consensus~\cite{DAC}, which enables each agent to asymptotically learn the gradient of the global objective function. This technique was first proposed simultaneously in~\cite{AugDGM,preNEXT}. Refs.~\cite{AugDGM,harness} combine it with the DGD structure to achieve improved convergence for smooth and convex problems. Refs.~\cite{preNEXT,NEXT}, on the other hand, propose the NEXT framework for a more general class of non-convex problems.}  

All of the aforementioned methods assume that the multi-agent network is undirected. In practice, it may not be possible to achieve undirected communication. It is of interest, thus, to develop algorithms that are fast and are applicable to arbitrary directed graphs. The challenge here lies in the fact that doubly-stochastic weights, standard in many distributed optimization algorithms, cannot be constructed over arbitrary directed graphs. In particular, the weight matrices in directed graphs can only be either row-stochastic or column-stochastic, but not both.

We now discuss related work on directed graphs. Early work based on DGD includes subgradient-push~\cite{opdirect_Tsianous,opdirect_Nedic} and Directed-Distributed Gradient Descent (D-DGD)~\cite{D-DGD,D-DPS}, with a sublinear convergence rate of~$\mc{O}(\frac{\log k}{\sqrt{k}})$. {\color{black}Some recent work extends these methods to asynchronous networks~\cite{AsySP,AsySPA,RstSP}}. To accelerate the convergence, DEXTRA~\cite{DEXTRA} combines push-sum~\cite{ac_directed0} and EXTRA~\cite{EXTRA} to achieve linear convergence given that the step-size lies in some non-trivial interval. This restriction on the step-size is later relaxed in ADD-OPT/Push-DIGing~\cite{xi2017add,diging}, which linearly converge for a sufficiently small step-size. Of relevance is also~\cite{sonata}, where distributed non-convex problems are considered with column-stochastic weights. More recent work~\cite{AB,ABM} proposes the~$\mc{AB}$ and~$\mc{AB}m$ algorithms, which employ both row- and uncoordinated- stochastic weights to achieve (accelerated) linear convergence over arbitrary strongly-connected graphs. Note that although the construction of doubly-stochastic weights is avoided, all of the aforementioned methods require each agent to know its out-degree to formulate {\color{black}doubly- or column-stochastic weights}. This requirement may be impractical in situations where the agents use a broadcast-based communication protocol. In contrast, Refs.~\cite{mai2016distributed,linear_row} provide algorithms that only use row-stochastic weights. Row-stochastic weight design is simple and is further applicable to broadcast-based methods. 

In this paper, we focus on optimization with row-stochastic weights following the recent work in~\cite{mai2016distributed,linear_row}. We propose a fast optimization algorithm, termed as FROST (Fast Row-stochastic Optimization with uncoordinated STep-sizes), which is applicable to both directed and undirected graphs with uncoordinated step-sizes among the agents. Distributed optimization (based on gradient tracking) with uncoordinated step-sizes has been previously studied in~\cite{AugDGM,xu2018convergence,nedic2017geometrically}, over undirected graphs with doubly-stochastic weights, and in~\cite{lu2018geometrical}, over directed graphs with column-stochastic weights. These works introduce a notion of heterogeneity among the step-sizes, defined respectively as the relative deviation of the step-sizes from their average in~\cite{xu2015augmented,xu2018convergence}, and as the ratio of the largest to the smallest step-size in~\cite{nedic2017geometrically,lu2018geometrical}. It is then shown that when the heterogeneity is small enough, i.e., the step-sizes are very close to each other, and when the largest step-size follows a bound as a function of the heterogeneity, the proposed algorithms linearly converge to the optimal solution. A challenge in this formulation is that choosing a sufficiently small, local step-size does not ensure small heterogeneity, while no step-size can be chosen to be zero. In contrast, a major contribution of this paper is that we establish linear convergence with uncoordinated step-sizes when the upper bound on the step-sizes is independent of any notion of heterogeneity. The implementation of FROST therefore is completely local, since each agent locally chooses a sufficiently small step-size, independent of other step-sizes, and locally assigns row-stochastic weights to the incoming information. In addition, our analysis shows that all step-sizes except one can be zero for the algorithm to work, which is a novel result in distributed optimization. We show that FROST converges linearly to the optimal solution for smooth and strongly-convex functions. 

\textbf{Notation:} We use lowercase bold letters to denote vectors and uppercase italic letters to denote matrices. The matrix,~$I_n$, represents the~$n\times n$ identity, whereas~$\mb{1}_n$ ($\mb{0}_n$) is the~$n$-dimensional uncoordinated vector of all~$1$'s ($0$'s). We further use~$\mb{e}_i$ to denote an $n$-dimensional vector of all~$0$'s except~$1$ at the~$i$th location. For an arbitrary vector,~$\mb{x}$, we denote its~$i$th element  by~$[\mb{x}]_i$ and $\mbox{diag}\{\mb{x}\}$ is a diagonal matrix with~$\mb{x}$ on its main diagonal. We denote by~$X\otimes Y$, the Kronecker product of two matrices,~$X$ and~$Y$. For a primitive, row-stochastic matrix,~$\ul{A}$, we denote its left and right Perron eigenvectors by $\bs{\pi}_r$ and~$\mb{1}_n$, respectively, such that~$\bs{\pi}_r^\top\mb{1}_n = 1$; similarly, for a primitive, column-stochastic matrix,~$\ul{B}$, we denote its left and right Perron eigenvectors by~$\mb{1}_n$ and~$\bs{\pi}_c$, respectively, such that~$\mb{1}_n^\top\bs{\pi}_c = 1$~\cite{matrix}. For a matrix,~$X$, we denote~$\rho(X)$ as its spectral radius and~$\mbox{diag}(X)$ as a diagonal matrix consisting of the corresponding diagonal elements of~$X$. The notation~$\|\cdot\|_2$ denotes the Euclidean norm of vectors and matrices, while~$\|\cdot\|_F$ denotes the Frobenius norm of matrices. {\color{black}Depending on the argument, we denote~$\|\cdot\|$ either as a particular matrix norm, the choice of which will be clear in Lemma~\ref{row_ctrac}, or a vector norm that is compatible with this matrix norm}, i.e.,~$\|X\mb{x}\|\leq\|X\|\|\mb{x}\|$ for all matrices,~$X$, and all vectors,~$\mb{x}$~\cite{matrix}.

We now describe the rest of the paper. Section~\ref{s2} states the problem and assumptions. Section~\ref{s3} reviews related algorithms {\color{black}that use doubly-stochastic or column-stochastic weights} and shows the intuition behind the analysis of these types of algorithms. In Section~\ref{s4}, we provide the main algorithm, FROST, proposed in this paper. In Section~\ref{s5}, we develop the convergence properties of FROST. Simulation results are provided in Section~\ref{s6} and Section~\ref{s7} concludes the paper.

\section{Problem Formulation}\label{s2}
{\color{black}Consider~$n$ agents communicating over a strongly-connected network},~$\mc{G}=(\mc{V},\mc{E})$, where~$\mc{V}=\{1,\cdots,n\}$ is the set of agents and~$\mc{E}$ is the set of edges,~$(i,j), i,j\in\mc{V}$, such that agent~$j$ can send information to agent~$i$, i.e.,~$j\rightarrow i$. Define~$\mc{N}_i^{{\scriptsize \mbox{in}}}$ as the collection of in-neighbors, i.e., the set of agents that can send information to agent~$i$. Similarly,~$\mc{N}_i^{{\scriptsize \mbox{out}}}$ as the set of out-neighbors of agent~$i$. Note that both~$\mc{N}_i^{{\scriptsize \mbox{in}}}$ and~$\mc{N}_i^{{\scriptsize \mbox{out}}}$ include agent~$i$. The agents are tasked to solve the following problem:
$$
\mbox{P1}:
	\quad\min_{\mb{x}} F(\mb{x})\triangleq\frac{1}{n}\sum_{i=1}^nf_i(\mb{x}),\nonumber
$$
where~$f_i:\mbb{R}^p\rightarrow\mbb{R}$ is a private cost function only known to agent~$i$. We denote the optimal solution of P1 as~$\mb{x}^*$. We will discuss different distributed algorithms related to this problem under the applicable set of assumptions, described below.
\begin{assump}\label{undig}The graph,~$\mc{G}$, is undirected and connected.
\end{assump}
\begin{assump}\label{dig}
The graph,~$\mc{G}$, is directed and strongly-connected.
\end{assump}
\begin{assump}\label{cvx_sub}
	Each local objective,~$f_i$, is convex with bounded subgradient.
\end{assump}
\begin{assump}\label{scvx}
	Each local objective,~$f_i$, is smooth and strongly-convex, i.e.,~$\forall i$ and~$\forall\mb{x}, \mb{y}\in\mbb{R}^p$,
		\begin{enumerate}[label={\roman*.}]
			\item there exists a positive constant~$l$ such that
$$
\qquad\|\mb{\nabla} f_i(\mb{x})-\mb{\nabla} f_i(\mb{y})\|_2\leq l\|\mb{x}-\mb{y}\|_2.
$$
	  \item there exists a positive constant~$\mu$ such that
	  $$
	  f_i(\mb{y})\geq f_i(\mb{x})+\nabla f_i(\mb{x})^\top(\mb{y}-\mb{x})+\frac{\mu}{2}\|\mb{x}-\mb{y}\|_2^2.
	  $$
	\end{enumerate}
Clearly, the Lipschitz-continuity and strong-convexity constants for the global objective function,~$F=\tfrac{1}{n}\sum_{i=1}^{n}f_i$, are~$l$ and~$\mu$, respectively. 
\end{assump}

\begin{assump}\label{iden}
	Each agent in the network has and knows its unique identifier, e.g.,~$1,\cdots,n$.

\noindent If this were not true, the agents may implement a finite-time distributed algorithm to assign such identifiers, e.g., with the help of task allocation algorithms,~\cite{kuhn55,saf_asil:14}, where the task at each agent is to pick a unique number from the set~$\{1,\ldots,n\}$. 
\end{assump}
\begin{assump}\label{outd}
	Each agent knows its out-degree in the network, i.e., the number of its out-neighbors.
\end{assump}
We note here that Assumptions~\ref{cvx_sub} and~\ref{scvx} do not hold together; when applicable,~the algorithms we discuss use either one of these assumptions but not both. We will discuss FROST, the algorithm proposed in this paper, under Assumptions~\ref{dig},~\ref{scvx},~\ref{iden}.

\section{Related work}\label{s3}
In this section, we discuss related distributed first-order methods and provide an intuitive explanation for each one of them.
\subsection{Algorithms using doubly-stochastic weights}
	A well-known solution to distributed optimization over undirected graphs is Distributed Gradient Descent (DGD)~\cite{DOPT1,uc_Nedic}, which combines distributed averaging with a local gradient step. Each agent~$i$ {\color{black}maintains a local estimate,~$\mb{x}_k^i$, of the optimal solution,~$\mb{x}^*$,} and implements the following iteration:
	\begin{equation} \label{DGD}
		\mb{x}_{k+1}^i=\sum_{j=1}^{n}w_{ij}\mb{x}_{k}^j-\alpha_k\nabla f_i\left(\mb{x}_{k}^i\right),
	\end{equation}
	{\color{black}where~$W=\{w_{ij}\}$ is doubly-stochastic and respects the graph topology}. The step-size~$\alpha_{k}$ is diminishing such that~$\sum_{k=0}^{\infty}\alpha_k=\infty$ and~$\sum_{k=0}^{\infty}\alpha_k^2<\infty$. Under the Assumptions~\ref{undig},~\ref{cvx_sub}, and~\ref{outd}, DGD converges to~$\mb{x}^*$ at the rate of~$\mc{O}(\frac{\log k}{\sqrt{k}})$. The convergence rate is slow because of the diminishing step-size. If a constant step-size is used in DGD, i.e.,~$\alpha_k=\alpha$, {\color{black}it converges faster to an error ball, proportional to~$\alpha$,} around~$\mb{x}^*$~\cite{DGD_Yuan,balancing}. {\color{black}This is because~$\mb{x}^*$ is not a fixed-point of the above iteration when the step-size is a constant.}
	
	To accelerate the convergence, Refs.~\cite{AugDGM,harness} recently propose a distributed first-order method based on gradient tracking, which uses a constant step-size and replaces the local gradient, at each agent in DGD, with an asymptotic estimator of the global gradient\footnote{EXTRA~\cite{EXTRA} is another related algorithm, which uses the difference between two consecutive DGD iterates to achieve linear convergence to the optimal solution.}.
	The algorithm is updated as follows~\cite{AugDGM,harness}:
	\begin{subequations}\label{harness_smooth}
		\begin{align}
		\mb{x}_{k+1}^i&=\sum_{j=1}^{n}w_{ij}\mb{x}_k^j-\alpha\mb{y}_k^i,\label{alg2a}\\
		\mb{y}_{k+1}^i&=\sum_{j=1}^{n}w_{ij}\mb{y}_k^{j}+\nabla f_i\left(\mb{x}_{k+1}^i\right)-\nabla f_i\left(\mb{x}_{k}^i\right),\label{alg2d}
		\end{align}
	\end{subequations}
	initialized with~$\mb{y}_0^i=\nabla f_i(\mb{x}_0^i)$ and an arbitrary~$\mb{x}_0^i$ at each agent. {\color{black}The first equation is essentially a descent method}, after mixing with neighboring information, where the descent direction is~$\mb{y}_k^i$, instead of~$\nabla f_i(\mb{x}_k^i)$ as was in Eq.~\eqref{DGD}. The second equation is a global gradient estimator when viewed as dynamic consensus~\cite{zhu2010discrete}, i.e.,~$\mb{y}_k^i$ asymptotically tracks the average of local gradients:~$\frac{1}{n}\sum_{i=1}^{n}\nabla f_i(\mb{x}_k^i)$. It is shown in Ref.~\cite{harness,xu2018convergence,diging} that~$\mb{x}_k^i$ converges linearly to~$\mb{x}^*$ under Assumptions~\ref{undig},~\ref{scvx},~\ref{outd}, with a sufficiently small step-size,~$\alpha$. Note that these methods, Eq.~\eqref{DGD} and Eqs.~\eqref{alg2a}-\eqref{alg2d}, are not applicable to directed graphs as they require doubly-stochastic weights. 
	
\subsection{Algorithms using column-stochastic weights}
We first consider the case when DGD in Eq.~\eqref{DGD} is applied to a directed graph and the weight matrix is column-stochastic but not row-stochastic. It can be obtained that~\cite{D-DGD}:
	\begin{equation}\label{DGD-C}
		\overline{\mb{x}}_{k+1}=\overline{\mb{x}}_k-\frac{\alpha_{k}}{n}\sum_{i=1}^{n}\nabla f_i(\mb{x}_k^i),
	\end{equation}
where~$\overline{\mb{x}}_k=\frac{1}{n}\sum_{i=1}^{n}\mb{x}_k^i$. From Eq.~\eqref{DGD-C}, {\color{black}it is clear that the average of the estimates,~$\overline{\mb{x}}_k$, converges to~$\mb{x}^*$, as Eq.~\eqref{DGD-C} can be viewed as a centralized gradient method if each local estimate~$\mb{x}_k^i$ converges to~$\overline{\mb{x}}_k$.} However, since the weight matrix is \textit{not row-stochastic}, the estimates of agents will not reach an agreement~\cite{D-DGD}. This discussion motivates combining DGD with an algorithm, called push-sum, briefly discussed next, that enables agreement over directed graphs with column-stochastic weights. 
	 
\subsubsection{Push-sum consensus}
Push-sum~\cite{ac_directed,ac_directed0} is a technique to achieve average-consensus over arbitrary digraphs. At time~$k$, each agent maintains two state vectors,~$\mb{x}_k^i$,~$\mb{z}_k^i\in\mathbb{R}^p$, and an auxiliary scalar variable,~$v_k^i$, initialized with~$v_0^i=1$. Push-sum performs the following iterations:
\begin{subequations}\label{Push-sum}
	\begin{align}
	v_{k+1}^i&=\sum_{j=1}^{n}b_{ij}v_k^j,\label{PSb}\\
	\mb{x}_{k+1}^i&=\sum_{j=1}^{n}b_{ij}\mb{x}_k^j\label{PSa}\\
	\mb{z}_{k+1}^i&=\frac{\mb{x}_{k+1}^i}{v_{k+1}^i},\label{PSc}
	\end{align}
\end{subequations}
where~$\ul{B}=\left\{b_{ij}\right\}$ is column-stochastic. Eq.~\eqref{PSb} can be viewed as an independent algorithm to asymptotically learn the right Perron eigenvector of~$\ul{B}$; recall that the right Perron eigenvector of~$\ul{B}$ is not~$\mb{1}_n$ because~$\ul{B}$ is not row-stochastic and we denote it by~$\bs{\pi}_c$. In fact, it can be verified that~$\lim_{k\rightarrow\infty}v_{i}(k)=n[\bs{\pi}_c]_i$ and that~$\lim_{k\rightarrow\infty}\mb{x}_{i}(k)=[\bs{\pi}_c]_i\sum_{i=1}^{n}\mb{x}_i(0)$. Therefore, the limit of~$\mb{z}_i(k)$, as the ratio of~$\mb{x}_i(k)$ over~$v_i(k)$, is the average of the initial values:
\begin{equation}
	\lim_{k\rightarrow\infty}\mb{z}_k^i = \lim_{k\rightarrow\infty}\frac{\mb{x}_k^i}{v_k^i}
	= \frac{[\bs{\pi}_c]_i\sum_{i=1}^{n}\mb{x}_i(0)}{n[\bs{\pi}_c]_i} = \frac{\sum_{i=1}^{n}\mb{x}_0^i}{n}. \nonumber
\end{equation}
In the next subsection, we present subgradient-push that applies push-sum to DGD, see~\cite{D-DGD,D-DPS} for an alternate approach that does not require eigenvector estimation of Eq.~\eqref{PSb}.

\subsubsection{Subgradient-Push}
To solve Problem P1 over arbitrary directed graphs, Refs.~\cite{opdirect_Tsianous,opdirect_Nedic} develop subgradient-push with the following iterations: 
\begin{subequations}\label{SP}
	\begin{align}
	v_{k+1}^i&=\sum_{j=1}^{n}b_{ij}v_k^j,\label{SPb}\\
	\mb{x}_{k+1}^i&=\sum_{j=1}^{n}b_{ij}\mb{x}_k^j-\alpha_{k} \nabla f_i\big(\mb{z}_k^i\big),\label{SPa}\\
	\mb{z}_{k+1}&=\frac{\mb{x}_{k+1}^i}{v_{k+1}^i},\label{SPc}
	\end{align}
\end{subequations}
initialized with~$v_0^i=1$ and an arbitrary~$\mb{x}_0^i$ at each agent. The step-size,~$\alpha_{k}$, satisfies the same conditions as in DGD. To understand these iterations, note that Eqs.~\eqref{SPb}-\eqref{SPc} are nearly the same as Eqs.~\eqref{PSb}-\eqref{PSc}, except that there is an additional gradient term in Eq.~\eqref{SPa}, which drives the limit of~$\mb{z}_k^i$ to~$\mb{x}^*$. Under the Assumptions~\ref{dig},~\ref{cvx_sub} and~\ref{outd}, subgradient-push converges to~$\mb{x}^*$ at the rate of~$\mc{O}(\frac{\log k}{\sqrt{k}})$. {\color{black}For extensions of subgradient-push to asynchronous networks, see recent work~\cite{AsySP,AsySPA,RstSP}}. We next describe an algorithm that significantly improves this convergence rate.

\subsubsection{ADD-OPT/Push-DIGing}\label{saddopt}
ADD-OPT~\cite{xi2017add}, extended to time-varying graphs in Push-DIGing~\cite{diging}, is a fast algorithm over directed graphs, which converges at a linear rate to~$\mb{x}^*$ under the Assumptions~\ref{dig},~\ref{scvx},  and~\ref{outd}, in contrast to the sublinear convergence of subgradient-push. The three vectors,~$\mb{x}_i(k)$,~$\mb{z}_i(k)$,~$\mb{y}_i(k)$, and a scalar~$v_i(k)$ maintained at each agent~$i$, are updated as follows:
\begin{subequations}\label{add-opt}
	\begin{align}
v_{k+1}^i&=\sum_{j=1}^{n}b_{ij}v_{k}^j,\label{addb}\\
	\mb{x}_{k+1}^i&=\sum_{j=1}^{n}b_{ij}\mb{x}_{k}^j-\alpha \mb{y}_k^i,\label{adda}\\
\mb{z}_{k+1}^i&=\frac{\mb{x}_{k+1}^i}{v_{k+1}^i},\label{addc} \\
	\mb{y}_{k+1}^i&=\sum_{j=1}^{n}b_{ij}\mb{y}_{k}^i+\nabla f_i\left(\mb{z}_{k+1}^i\right)-\nabla f_i\left(\mb{z}_{k}^i\right),\label{ADDOPTd}
	\end{align}
\end{subequations}
where each agent is initialized with~$v_0^i=1$,~$\mb{y}_0^i=\nabla f_i(\mb{x}_0^i)$, and an arbitrary~$\mb{x}_0^i$. We note here that ADD-OPT/Push-DIGing essentially applies push-sum to the algorithm in Eqs.~\eqref{alg2a}-\eqref{alg2d}, when the doubly-stochastic weights therein are replaced by column-stochastic weights. 

\subsubsection{The~$\mc{AB}$ algorithm}\label{slinear}
As we can see, subgradient-push and ADD-OPT/Push-DIGing, described before, have a nonlinear term that comes from the division by the eigenvector estimation. {\color{black}In contrast, the~$\mc{AB}$ algorithm, introduced in~\cite{AB} and extended to~$\mc{AB}m$ with the addition of a heavy-ball momentum term in~\cite{ABM} and to time-varying graphs in~\cite{tv-ab}}, removes this nonlinearity and remains applicable to directed graphs by a simultaneous application of row- and column-stochastic weights\footnote{See~\cite{D-DGD,D-DPS} for {\color{black}related work with sublinear rate based on surplus consensus~\cite{ac_Cai1}.}}. Each agent~$i$ maintains two variables:~$\mb{x}_k^i$,~$\mb{y}_k^i\in\mbb{R}^p$, where, as before,~$\mb{x}_k^i$ is the estimate of~$\mb{x}^*$, and~$\mb{y}_k^i$ tracks the average gradient,~$\frac{1}{n}\sum_{i=1}^{n}\nabla f_i(\mb{x}_k^i)$. The~$\mc{AB}$ algorithm, initialized with~$\mb{y}_0^i=\nabla f_i(\mb{x}_0^i)$ and arbitrary~$\mb{x}_0^i$ at each agent, performs the following iterations. 
\begin{subequations}\label{alg1}
	\begin{align}
	\mb{x}_{k+1}^i&=\sum_{j=1}^na_{ij}\mb{x}_k^j-\alpha\mb{y}_k^i,\label{alg1a}\\
	\mb{y}_{k+1}^i&=\sum_{j=1}^nb_{ij}\mb{y}_k^{j}+\nabla f_i\left(\mb{x}_{k+1}^i\right)-\nabla f_i\left(\mb{x}_k^i\right),\label{alg1d}
	\end{align}
\end{subequations}
where~$\ul{A}=\{a_{ij}\}$ is row-stochastic and~$\ul{B}=\{b_{ij}\}$ is column-stochastic. It is shown that~$\mc{AB}$ converges linearly to~$\mb{x}^*$ for sufficiently small step-sizes under the Assumptions~\ref{dig},~\ref{scvx} and~\ref{outd}~\cite{AB}. Therefore,~$\mc{AB}$ can be viewed as a generalization of the algorithm in Eqs.~\eqref{alg2a}-\eqref{alg2d} as the doubly-stochastic weights therein are replaced by row- and column-stochastic weights. Furthermore, it is shown in~\cite{ABM} that ADD-OPT/Push-DIGing in Eqs.~\eqref{addb}-\eqref{ADDOPTd} in fact can be derived from an equivalent form of~$\mc{AB}$ after a  state transformation on the~$\mb{x}_k$-update; see~\cite{ABM} for details. {\color{black}For applications of the~$\mc{AB}$ algorithm to distributed least squares, see, for instance,~\cite{abLS}.} 

\section{Algorithms using Row-stochastic Weights}\label{s4}
All of the aforementioned methods require at least each agent to know its out-degree in the network in order to construct doubly or column-stochastic weights. This requirement may be infeasible, e.g., when agents use broadcast-based communication protocols. Row-stochastic weights, on the other hand, are easier to implement in a distributed manner as every agent locally assigns an appropriate weight to each incoming variable from its in-neighbors. In the next section, we describe the main contribution of this paper, i.e., a fast optimization algorithm that uses only row-stochastic weights and uncoordinated step-sizes. 

To motivate the proposed algorithm, we first consider DGD in Eq.~\eqref{DGD} over directed graphs when the weight matrix in DGD is chosen to be row-stochastic, but not column-stochastic. From consensus arguments and the fact that the step-size~$\alpha_{k}$ goes to~$0$, it can be verified that the agents achieve agreement. However, this agreement is not on the optimal solution. This can be shown~\cite{D-DGD} by defining an accumulation state,~$\wh{\mb{x}}_k=\sum_{i=1}^{n}[\bs{\pi}_r]_i\mb{x}_k^i$, where~$\bs{\pi}_r$ is the left Perron eigenvector of the row-stochastic weight matrix, to obtain
\begin{equation} \label{DGD-R}
\widehat{\mb{x}}(k+1) = \wh{\mb{x}}(k) - \alpha_{k} \sum_{i=1}^{n} [\bs{\pi}_r]_i \nabla f_i\big(\mb{x}_i(k)\big). 
\end{equation}
It can be verified that the agents agree to the limit of the above iteration, which is suboptimal since this iteration minimizes a weighted sum of the objective functions and not the sum. This argument leads to a modification of Eq.~\eqref{DGD-R} that cancels the \textit{imbalance} in the gradient term caused by the fact that~$\bs{\pi}_r$ is not a vector of all~$1$'s, {\color{black}a consequence of losing the column-stochasticity in the weight matrix}. The modification, introduced in~\cite{mai2016distributed}, is implemented as follows:
\begin{subequations}\label{ROW}
	\begin{align}
	\mb{y}_{k+1}^i=&\sum_{j=1}^na_{ij}\mb{y}_k^j,\label{ROWb}\\
	\mb{x}_{k+1}^i=&\sum_{j=1}^na_{ij}\mb{x}_k^j-\alpha_{k}\frac{\nabla f_i\left(\mb{x}_{k}^i\right)}{[\mb{y}_k^i]_i},\label{ROWa}
	\end{align}
\end{subequations}
where~$\ul{A}=\{a_{ij}\}$ is row-stochastic and the algorithm is initialized with~$\mb{y}_0^i=\mb{e}_i$ and an arbitrary~$\mb{x}_0^i$ at each agent. Eq.~\eqref{ROWb} asymptotically learns the left Perron eigenvector of the row-stochastic weight matrix~$\ul{A}$, i.e.,~$	\lim_{k\rightarrow\infty}\mb{y}_k^i=\bs{\pi}_r,\forall i$. The above algorithm achieves a sublinear convergence rate of~$\mc{O}(\frac{\log k}{\sqrt{k}})$ under the Assumptions~\ref{dig},~\ref{cvx_sub}, and~\ref{iden}, see~\cite{mai2016distributed} for details.

\subsection{FROST (Fast Row-stochastic Optimization with uncoordinated STep-sizes)}
Based on the insights that gradient tracking and constant step-sizes provide exact and fast linear convergence, we now describe FROST that adds gradient tracking to the algorithm in Eqs.~\eqref{ROWb}-\eqref{ROWa} while keeping constant but uncoordinated step-sizes at the agents. Each agent~$i$ at the~$k$th iteration maintains three variables,~$\mb{x}_k^i,\mb{z}_k^i\in\mathbb{R}^p$, and~$\mb{y}_k^i\in\mathbb{R}^n$. At~$k+1$-th iteration, agent~$i$ performs the following update: 
\begin{subequations}\label{FROW}
	\begin{align}
	\mb{y}_{k+1}^i=&\sum_{j=1}^na_{ij}\mb{y}_{k}^j,\label{FROWb}\\
	\mb{x}_{k+1}^i=&\sum_{j=1}^na_{ij}\mb{x}_k^j-\alpha_i\mb{z}_{k}^j,\label{FROWa}\\
	\mb{z}_{k+1}^i=&\sum_{j=1}^na_{ij}\mb{z}_{k}^j+\frac{\nabla f_i\left(\mb{x}_{k+1}^i\right)}{[\mb{y}_{k+1}^i]_i}-\frac{\nabla f_i\left(\mb{x}_{k}^i\right)}{[\mb{y}_{k}^i]_i},\label{FROWc}
	\end{align}
\end{subequations}
where~$\alpha_i$'s are the uncoordinated step-sizes locally chosen at each agent and the row-stochastic weights,~$\ul{A}=\left\{a_{ij}\right\}$, respect the graph topology such that:
\begin{align*} 
a_{ij}&=\left\{
\begin{array}{rl}
>0,&j\in\mc{N}_i^{{\scriptsize \mbox{in}}},\\
0,&\mbox{otherwise},
\end{array}
\right.
\qquad
\sum_{j=1}^na_{ij}=1,~\forall i. 
\end{align*} 
The algorithm is initialized with an arbitrary~$\mb{x}_0^i$,~$\mb{y}_0^i=\mb{e}_i$, and~$\mb{z}_0^i=\nabla f_i(\mb{x}_0^i)$. We point out that the initial condition for Eq.~\eqref{FROWb} and the divisions in Eq.~\eqref{FROWc} require each agent to have a unique identifier. Clearly, Assumption~\ref{iden} is applicable here. Note that Eq.~\eqref{FROWc} is a modified gradient tracking update, first applied to optimization with row-stochastic weights in~\cite{linear_row}, where the divisions are used to eliminate the imbalance caused by the left Perron eigenvector of the (row-stochastic) weight matrix~$\ul{A}$. We note that the algorithm in~\cite{linear_row} requires identical step-sizes at the agents and thus is a special case of Eqs.~\eqref{FROWb}-\eqref{FROWc}. 

For analysis purposes, we write Eqs.~\eqref{FROWb}-\eqref{FROWc} in a compact vector-matrix form. To this aim, we introduce some notation as follows: let~$\mb{x}_k$,~$\mb{y}_k$, and~$\nabla\mb f(\mb{x}_k)$ collect the local variables~$\mb{x}_k^i$,~$\mb{y}_k^i$ and~$\nabla f_i\left(\mb{x}_k^i\right)$ in a vector in~$\mathbb{R}^{np}$, respectively, and define
\begin{eqnarray*}
\ul{Y}_k&=&[\mb{y}_k^1,\cdots,\mb{y}_k^n]^\top,\nonumber\\
Y_k &=& \ul{Y}_k \otimes I_p, \\
\widetilde{Y}_k&=&\mbox{diag}\big(Y_k\big),\nonumber\\
A &=& \ul{A} \otimes I_p, \\
\bs{\alpha} &=& [\alpha_1,\cdots,\alpha_n]^\top, \\
D &=& \mbox{diag}\{\bs{\alpha}\} \otimes I_p.
\end{eqnarray*}
Since the weight matrix~$\ul{A}$ is {\color{black}primitive with positive diagonals}, it is straightforward to verify that~$\widetilde{Y}_k$ is invertible for any~$k$. Based on the notation above, Eqs.~\eqref{FROWb}-\eqref{FROWc} can be written compactly as follows:
\begin{subequations}\label{alg1_matrix}
	\begin{eqnarray}
	\ul{Y}_{k+1}&=&\ul{A}~\ul{Y}_k,\label{alg1_mb}\\
	\mb{x}_{k+1}&=&A\mb{x}_k-D\mb{z}_k,\label{alg1_ma}\\
	\mb{z}_{k+1}&=&A\mb{z}_{k}+\widetilde{Y}_{k+1}^{-1} \nabla\mb{f}(\mb{x}_{k+1})-\widetilde{Y}_k^{-1} \nabla\mb{f}(\mb{x}_k),\label{alg1_md}
	\end{eqnarray}
\end{subequations}
where~$\ul{Y}_0=I_n,$~$\mb{z}_0=\nabla\mb{f}_0$, and~$\mb{x}_0$ is arbitrary. We emphasize that the implementation of FROST needs no knowledge of agent's out-degree anywhere in the network in contrast to the earlier related work in~\cite{opdirect_Nedic,opdirect_Tsianous,D-DGD,D-DPS,DEXTRA,xi2017add,diging,AB,ABM}. Note that Refs.~\cite{exactdiffusion1,exactdiffusion2} also use row-stochastic weights but require an additional locally-balanced assumption and are only applicable to undirected graphs. 

\section{Convergence Analysis}\label{s5}
In this section, we present the convergence analysis of FROST described in Eqs.~\eqref{alg1_mb}-\eqref{alg1_md}. We first define a few additional variables as follows:
\begin{eqnarray}
Y_\infty&=&\lim_{k\rightarrow\infty}Y_k, \nonumber\\
\widetilde{Y}_\infty&=&\mbox{diag}\big(Y_\infty\big), \nonumber\\
\nabla\mb{f}(\mb{x}^*)&=&[\nabla f_1(\mb{x}^*)^\top,\cdots,\nabla f_n(\mb{x}^*)^\top]^\top,\nonumber\\
\tau &=& \left\|A-I_{np}\right\|_2, \nonumber\\
\epsilon &=& \left\|I_{np}-Y_\infty\right\|_2, \nonumber \\
\overline{\alpha} &=& \max_i\{\alpha_i\}, \nonumber\\
y&=&\sup_k\left\|Y_k\right\|_2,\nonumber\\
\widetilde{y}&=&\sup_k\left\|\widetilde{Y}_k^{-1}\right\|_2.\nonumber
\end{eqnarray}
Since~$\ul{A}$ is primitive and row-stochastic, from the Perron-Frobenius theorem~\cite{matrix}, we note that~$Y_\infty =  \left(\mb{1}_n\bs{\pi}_r^\top\right)\otimes I_p$, where~$\bs{\pi}_r^\top$ is the left Perron eigenvector of~$\ul{A}$.

\subsection{Auxiliary relations}
We now start the convergence analysis with a key lemma regarding the contraction of the augmented weight matrix~$A$ under an arbitrary norm. 

\begin{lem}\label{row_ctrac}
	Let Assumption \ref{dig} hold and consider the augmented weight matrix~$A=\ul{A}\otimes I_p$. There exists a vector norm,~$\|\cdot\|$, such that~$\forall\mb{a}\in\mbb{R}^{np}$,
	\begin{align*}
	\left\|A\mb{a}-Y_\infty\mb{a}\right\|&\leq\sigma\left\|\mb{a}-Y_\infty\mb{a}\right\|, 
	\end{align*}
	where~$0<\sigma<1$ is some constant.
\end{lem}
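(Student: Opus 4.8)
The plan is to reduce the claim to a statement about the spectral radius of $A-Y_\infty$ and then invoke the standard fact that a matrix whose spectral radius is below one is a contraction in a suitably chosen norm. First I would record the algebraic relations coming from the Perron--Frobenius structure of $\ul{A}$. Since $\ul{A}\mb{1}_n=\mb{1}_n$, $\bs{\pi}_r^\top\ul{A}=\bs{\pi}_r^\top$, and $\bs{\pi}_r^\top\mb{1}_n=1$, writing $\ul{Y}_\infty=\mb{1}_n\bs{\pi}_r^\top$ gives $\ul{A}\,\ul{Y}_\infty=\ul{Y}_\infty\ul{A}=\ul{Y}_\infty$ and $\ul{Y}_\infty^2=\ul{Y}_\infty$; tensoring with $I_p$ yields $AY_\infty=Y_\infty A=Y_\infty$ and $Y_\infty^2=Y_\infty$, so $Y_\infty$ is a projection that commutes with $A$.

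Using these identities, I would rewrite the left-hand side in a form that exposes the contraction. Because $(A-Y_\infty)Y_\infty=AY_\infty-Y_\infty^2=0$, for any $\mb{a}\in\mbb{R}^{np}$ we have
\begin{align*}
A\mb{a}-Y_\infty\mb{a}=(A-Y_\infty)\mb{a}=(A-Y_\infty)\left(\mb{a}-Y_\infty\mb{a}\right),
\end{align*}
so it suffices to bound $A-Y_\infty$ as an operator acting on the vector $\mb{a}-Y_\infty\mb{a}$.

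The crux is the spectrum of $A-Y_\infty=(\ul{A}-\ul{Y}_\infty)\otimes I_p$. I would argue that subtracting the rank-one projector $\ul{Y}_\infty=\mb{1}_n\bs{\pi}_r^\top$ annihilates the Perron eigenvalue of $\ul{A}$ while leaving the remaining eigenvalues unchanged. Since $\ul{A}$ is primitive (Assumption~\ref{dig}), the eigenvalue $1$ is simple and $\ul{Y}_\infty$ is its associated spectral projector: on the Perron direction $(\ul{A}-\ul{Y}_\infty)\mb{1}_n=\mb{0}_n$, while for any eigenvector $\mb{v}$ of $\ul{A}$ with eigenvalue $\lambda\neq1$ one has $\bs{\pi}_r^\top\mb{v}=0$ (from $\bs{\pi}_r^\top\ul{A}\mb{v}=\lambda\bs{\pi}_r^\top\mb{v}$ and $\bs{\pi}_r^\top\ul{A}=\bs{\pi}_r^\top$), hence $(\ul{A}-\ul{Y}_\infty)\mb{v}=\lambda\mb{v}$. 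Thus the eigenvalues of $\ul{A}-\ul{Y}_\infty$ are $0$ together with the non-Perron eigenvalues of $\ul{A}$, all of modulus strictly less than $1$, so $\rho(A-Y_\infty)=\rho(\ul{A}-\ul{Y}_\infty)<1$.

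Finally, fixing any $\sigma$ with $\rho(A-Y_\infty)<\sigma<1$, I would invoke the standard result~\cite{matrix} that for every $\epsilon>0$ there is a vector norm whose induced matrix norm satisfies $\|A-Y_\infty\|\leq\rho(A-Y_\infty)+\epsilon$; taking $\epsilon=\sigma-\rho(A-Y_\infty)$ produces a norm $\|\cdot\|$, with its compatible vector norm, for which $\|A-Y_\infty\|\leq\sigma$. Combining with the identity above gives
\begin{align*}
\left\|A\mb{a}-Y_\infty\mb{a}\right\|=\left\|(A-Y_\infty)\left(\mb{a}-Y_\infty\mb{a}\right)\right\|\leq\|A-Y_\infty\|\,\left\|\mb{a}-Y_\infty\mb{a}\right\|\leq\sigma\left\|\mb{a}-Y_\infty\mb{a}\right\|,
\end{align*}
which is the claim. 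I expect the main obstacle to be the spectral computation, i.e., carefully justifying that the rank-one subtraction removes only the simple Perron eigenvalue and then using primitivity to place the remaining spectrum strictly inside the unit disk; the subsequent norm-existence step is a textbook fact.
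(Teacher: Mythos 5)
Your proposal is correct and follows essentially the same route as the paper: both use the identities $AY_\infty=Y_\infty$ and $Y_\infty^2=Y_\infty$ to write $A\mb{a}-Y_\infty\mb{a}=(A-Y_\infty)(\mb{a}-Y_\infty\mb{a})$, establish $\rho(A-Y_\infty)<1$ via Perron--Frobenius, and then invoke the existence of a matrix norm arbitrarily close to the spectral radius together with a compatible vector norm. Your explicit verification that subtracting the rank-one projector removes only the simple Perron eigenvalue is a more detailed justification of the spectral-radius claim that the paper simply cites, but the argument is the same.
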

\begin{proof}
	It can be verified that~$AY_{\infty}=Y_{\infty}$ and~$Y_{\infty}Y_{\infty}=Y_{\infty}$, which leads to the following relation:
	\begin{eqnarray}
	A\mb{a}-Y_\infty \mb{a}=(A-Y_{\infty})(\mb{a}-Y_{\infty}\mb{a}) \nonumber.
	\end{eqnarray}
	Next, from the Perron-Frobenius theorem, we note that~\cite{matrix}
	\begin{equation}
	\rho(A-Y_{\infty})=\rho\left(\ul{A}-\mb{1}_n\bs{\pi}_r^\top\right)<1; \nonumber
	\end{equation}
	thus there exists a matrix norm,~$\|\cdot\|$, with~$\left\|A-Y_{\infty}\right\| <1$ and a compatible vector norm,~$\|\cdot\|$, see Ch. 5 in~\cite{matrix}, such that
	\begin{eqnarray}
	\left\|A\mb{a}-Y_{\infty}\mb{a}\right\|
	\leq \mn{A-Y_{\infty}} \left\|\mb{a}-Y_{\infty}\mb{a}\right\|, \nonumber
	\end{eqnarray}
	and the lemma follows with~$\sigma=\left\|A-Y_{\infty}\right\|$. 
\end{proof}

As shown above, the existence of a norm in which the consensus process with row-stochastic matrix~$\ul{A}$ is a contraction does not follow the standard $2$-norm argument for doubly-stochastic matrices~\cite{harness,diging}. The ensuing arguments built on this notion of contraction under arbitrary norms were first introduced in~\cite{xi2017add} for column-stochastic weights and in~\cite{linear_row} for row-stochastic weights; these arguments are harmonized later to hold simultaneously for both row- and column-stochastic weights in~\cite{AB,ABM}. The next lemma, a direct consequence of the contraction introduced in Lemma~\ref{row_ctrac}, is a standard result from consensus and Markov chain theory~\cite{hornjohnson:13}. 
\begin{lem}\label{lem2}
	 Consider~$Y_k$, generated from the weight matrix~$\ul{A}$. We have:
	\begin{align}\label{DkDinfty1}
\begin{color}{black}	\left\|Y_k-Y_\infty\right\|_2\leq r\sigma^{k},\qquad  \forall k, \nonumber
\end{color}	\end{align}
where~$r$ is some positive constant and~$\sigma$ is the contraction factor defined in Lemma~\ref{row_ctrac}.
\end{lem}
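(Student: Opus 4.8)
The plan is to exploit the explicit form $Y_k = A^k$ and reduce the claim to a submultiplicative bound on powers of $A-Y_\infty$. First I would observe that the recursion~\eqref{alg1_mb}, $\ul{Y}_{k+1}=\ul{A}\,\ul{Y}_k$ with $\ul{Y}_0=I_n$, gives $\ul{Y}_k=\ul{A}^k$, and hence, using $(\ul{A}\otimes I_p)^k=\ul{A}^k\otimes I_p$, that $Y_k=A^k$ with $Y_0=I_{np}$. The statement to prove therefore becomes $\|A^k-Y_\infty\|_2\leq r\sigma^k$.

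The key step is the matrix identity $(A-Y_\infty)^k=A^k-Y_\infty$, valid for every $k\geq 1$, which I would establish by induction. The base case is immediate, and the inductive step uses three facts: $AY_\infty=Y_\infty$ and $Y_\infty Y_\infty=Y_\infty$ (both already noted in the proof of Lemma~\ref{row_ctrac}), together with $Y_\infty A=Y_\infty$, which follows because $\bs{\pi}_r^\top$ is the left Perron eigenvector of the row-stochastic matrix $\ul{A}$, so $\bs{\pi}_r^\top\ul{A}=\bs{\pi}_r^\top$ and hence $Y_\infty A=(\mb{1}_n\bs{\pi}_r^\top\ul{A})\otimes I_p=Y_\infty$. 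Expanding $(A^k-Y_\infty)(A-Y_\infty)=A^{k+1}-A^kY_\infty-Y_\infty A+Y_\infty Y_\infty$ and applying these identities, the three $Y_\infty$-terms collapse to a single $-Y_\infty$, which closes the induction.

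With the identity in hand, I would invoke the matrix norm $\mn{\cdot}$ constructed in Lemma~\ref{row_ctrac}, for which $\mn{A-Y_\infty}=\sigma<1$. Submultiplicativity then yields $\mn{A^k-Y_\infty}=\mn{(A-Y_\infty)^k}\leq\mn{A-Y_\infty}^k=\sigma^k$. Finally, since all matrix norms on a finite-dimensional space are equivalent, there is a constant $c>0$ with $\|M\|_2\leq c\,\mn{M}$ for every $np\times np$ matrix $M$; applying this to $M=A^k-Y_\infty$ and setting $r=c$ gives $\|Y_k-Y_\infty\|_2=\|A^k-Y_\infty\|_2\leq r\sigma^k$, as claimed.

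The only genuinely delicate point is the bookkeeping between the two norms: the contraction factor $\sigma$ lives in the problem-specific norm $\mn{\cdot}$ of Lemma~\ref{row_ctrac}, whereas the conclusion is stated in $\|\cdot\|_2$, so the geometric decay must be transported across the norm-equivalence constant $c$, which is absorbed into $r$. Everything else is routine once the identity $(A-Y_\infty)^k=A^k-Y_\infty$ is recognized; indeed, this identity is precisely the matrix-level restatement of the vectorwise contraction of Lemma~\ref{row_ctrac} iterated $k$ times, which is why Lemma~\ref{lem2} is described as a direct consequence of it.
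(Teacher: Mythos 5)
Your proof is correct and follows essentially the same route as the paper's: both reduce the claim to the identity $Y_k-Y_\infty=(A-Y_\infty)^k$, bound $\left\|(A-Y_\infty)^k\right\|\leq\sigma^k$ in the norm of Lemma~\ref{row_ctrac}, and absorb the norm-equivalence constant into $r$. You merely spell out the induction and the identities $AY_\infty=Y_\infty A=Y_\infty Y_\infty=Y_\infty$ that the paper leaves implicit.
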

\begin{proof}
Note that~$Y_k = \ul{A}^k\otimes I_p=A^k$ from~Eq.~\eqref{alg1_mb}, and
\begin{equation}
	Y_k-Y_\infty ~=~ A^k - Y_\infty ~=~ (A-Y_\infty)(A^{k-1} - Y_\infty)~=~ (A-Y_\infty)^k. \nonumber
\end{equation}
From Lemma~\ref{row_ctrac}, we have that
\begin{equation}
	\left\|Y_k-Y_\infty\right\| = \left\|(A-Y_\infty)^k\right\| \leq \sigma^k. \nonumber
\end{equation}
The proof follows from the fact that all matrix norms are equivalent.
\end{proof}
As a consequence of Lemma~\ref{lem2}, we next establish the linear convergence of the sequences~$\left\{ \widetilde{Y}_k^{-1}\right\}$ and~$\left\{ \widetilde{Y}_{k+1}^{-1}-\widetilde{Y}_k^{-1}\right\}$.
\begin{lem}\label{yy-}
	The following inequalities hold~$\forall k$:
	\begin{inparaenum}[(a)]
		\item
		$\left\|\widetilde{Y}_k^{-1}-\widetilde{Y}_{\infty}^{-1}\right\|_2\leq \sqrt{n}r\widetilde{y}^2\sigma^{k}$;
		\item 
		$\left\|\widetilde{Y}_{k+1}^{-1}-\widetilde{Y}_k^{-1}\right\|_2\leq 
		2\sqrt{n}r\widetilde{y}^2\sigma^{k}$.
	\end{inparaenum}
\end{lem}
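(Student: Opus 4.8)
The plan is to prove part (a) directly from the standard resolvent identity for the difference of two inverses, and then to obtain part (b) almost immediately from (a) via the triangle inequality through $\widetilde{Y}_\infty^{-1}$. For part (a), I would start from
\[
\widetilde{Y}_k^{-1}-\widetilde{Y}_{\infty}^{-1}
=\widetilde{Y}_k^{-1}\big(\widetilde{Y}_{\infty}-\widetilde{Y}_k\big)\widetilde{Y}_{\infty}^{-1},
\]
which holds because all three matrices are invertible diagonal matrices: $\widetilde{Y}_k=\mbox{diag}(Y_k)$ is invertible for every $k$, as already noted in the text, and $\widetilde{Y}_\infty=\mbox{diag}(Y_\infty)$ is invertible since its diagonal entries are the strictly positive components $[\bs{\pi}_r]_i$ of the left Perron eigenvector of $\ul{A}$.

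Applying submultiplicativity of $\|\cdot\|_2$ to this identity gives
\[
\big\|\widetilde{Y}_k^{-1}-\widetilde{Y}_{\infty}^{-1}\big\|_2
\leq \big\|\widetilde{Y}_k^{-1}\big\|_2\,\big\|\widetilde{Y}_{\infty}-\widetilde{Y}_k\big\|_2\,\big\|\widetilde{Y}_{\infty}^{-1}\big\|_2 .
\]
The two inverse factors are each at most $\widetilde{y}=\sup_k\|\widetilde{Y}_k^{-1}\|_2$: this is immediate for $\widetilde{Y}_k^{-1}$ by definition, and for $\widetilde{Y}_\infty^{-1}$ it follows by continuity of inversion on invertible diagonal matrices together with continuity of the norm, since $\widetilde{Y}_\infty=\lim_k\widetilde{Y}_k$ yields $\|\widetilde{Y}_\infty^{-1}\|_2=\lim_k\|\widetilde{Y}_k^{-1}\|_2\leq\widetilde{y}$. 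To control the middle factor, I would write $\widetilde{Y}_{\infty}-\widetilde{Y}_k=\mbox{diag}(Y_\infty-Y_k)$ and note that the spectral norm of this diagonal matrix is unchanged by the $\otimes I_p$ repetition, so it equals the spectral norm of the underlying $n\times n$ diagonal part; bounding that by its Frobenius norm over the $n$ diagonal entries, each of modulus at most $\|Y_\infty-Y_k\|_2$, gives $\|\widetilde{Y}_{\infty}-\widetilde{Y}_k\|_2\leq\sqrt{n}\,\|Y_\infty-Y_k\|_2$. Invoking Lemma~\ref{lem2} to replace $\|Y_\infty-Y_k\|_2$ by $r\sigma^{k}$ then delivers exactly $\|\widetilde{Y}_k^{-1}-\widetilde{Y}_{\infty}^{-1}\|_2\leq\sqrt{n}\,r\,\widetilde{y}^2\sigma^{k}$, proving (a).

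For part (b), I would insert $\widetilde{Y}_\infty^{-1}$ and use the triangle inequality together with (a) at indices $k$ and $k+1$:
\[
\big\|\widetilde{Y}_{k+1}^{-1}-\widetilde{Y}_k^{-1}\big\|_2
\leq \big\|\widetilde{Y}_{k+1}^{-1}-\widetilde{Y}_\infty^{-1}\big\|_2
+\big\|\widetilde{Y}_\infty^{-1}-\widetilde{Y}_k^{-1}\big\|_2
\leq \sqrt{n}\,r\,\widetilde{y}^2\big(\sigma^{k+1}+\sigma^{k}\big).
\]
Since $0<\sigma<1$ by Lemma~\ref{row_ctrac}, we have $\sigma^{k+1}+\sigma^{k}=(1+\sigma)\sigma^{k}\leq 2\sigma^{k}$, which yields the claimed bound $2\sqrt{n}\,r\,\widetilde{y}^2\sigma^{k}$.

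The only genuinely delicate points, rather than true obstacles, are the uniform control of $\|\widetilde{Y}_\infty^{-1}\|_2$ by $\widetilde{y}$ (handled by the limit argument above, which is legitimate because the diagonal entries $[\mb{y}_k^i]_i$ stay bounded away from zero, guaranteeing $\widetilde{y}<\infty$) and the passage from the spectral norm of $Y_\infty-Y_k$ to that of its diagonal part. The latter is where the $\sqrt{n}$ enters; I note it is conservative, since $\|\mbox{diag}(M)\|_2=\max_i|M_{ii}|\leq\|M\|_2$ would in fact suffice, but routing the estimate through the Frobenius norm keeps the derivation uniform with the rest of the analysis. Everything else is a mechanical application of submultiplicativity and the geometric decay supplied by Lemma~\ref{lem2}.
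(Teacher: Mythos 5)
Your proof is correct and follows essentially the same route as the paper: the identity $\widetilde{Y}_k^{-1}-\widetilde{Y}_{\infty}^{-1}=\widetilde{Y}_k^{-1}(\widetilde{Y}_{\infty}-\widetilde{Y}_k)\widetilde{Y}_{\infty}^{-1}$ with submultiplicativity, the $\sqrt{n}$ from passing through the Frobenius norm of the diagonal part combined with Lemma~\ref{lem2}, and the triangle inequality through $\widetilde{Y}_\infty^{-1}$ for part (b). Your explicit limit argument for $\|\widetilde{Y}_\infty^{-1}\|_2\leq\widetilde{y}$ and your remark that the $\sqrt{n}$ is avoidable are small refinements the paper leaves implicit, but they do not change the argument.
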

\begin{proof}
	The proof of (a) is as follows:
	\begin{align}
	\left\|\widetilde{Y}_k^{-1}-\widetilde{Y}_{\infty}^{-1}\right\|_2
	&=\left\|\widetilde{Y}_k^{-1}(\widetilde{Y}_{\infty}-\widetilde{Y}_k)\widetilde{Y}_{\infty}^{-1}\right\|_2,\nonumber\\
	&\leq\left\|\widetilde{Y}_k^{-1}\right\|_2\left\|\widetilde{Y}_k-\widetilde{Y}_{\infty}\right\|_2\left\|\widetilde{Y}_{\infty}^{-1}\right\|_2,\nonumber\\
	&\leq \widetilde{y}^2\left\|\mbox{diag}\left(Y_k-Y_{\infty}\right)\right\|_2 \nonumber\\
	&\leq \sqrt{n}r\widetilde{y}^2\sigma^{k},\nonumber
	\end{align}
	where the last inequality uses Lemma~\ref{lem2} and the fact that~$\|X\|_F\leq \sqrt{n}\|{X}_2\|,\forall X\in\mathbb{R}^{n\times n}$. The result in~(b) is straightforward by applying~(a), i.e.,
	\begin{align*}
	\left\|\widetilde{Y}_{k+1}^{-1}-\widetilde{Y}_k^{-1}\right\|_2\leq&
	\left\|\widetilde{Y}_{k+1}^{-1}-\widetilde{Y}_{\infty}^{-1}\right\|_2+
	\left\|\widetilde{Y}_{\infty}^{-1}-\widetilde{Y}_k^{-1}\right\|_2, \\
	\leq&\sqrt{n}r\widetilde{y}^2\sigma^{k+1}+\sqrt{n}r\widetilde{y}^2\sigma^{k},
	\end{align*}
	which completes the proof.
\end{proof}
The next lemma presents the dynamics that govern the evolution of the weighted sum of~$\mb{z}_k$; recall that~$\mb{z}_k$, in Eq.~\eqref{alg1_md}, asymptotically tracks the average of local gradients,~$\frac{1}{n}\sum_{i=1}^{n}\nabla f_i\left(\mb{x}_k^i\right)$.
\begin{lem}\label{z}
	The following equation holds for all~$k$:
	\begin{equation}
	Y_{\infty}\mb{z}_k=Y_\infty\widetilde{Y}_k^{-1}\nabla\mb{f}(\mb{x}_k).
	\end{equation}
\end{lem}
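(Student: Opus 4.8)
The plan is to prove the identity by induction on~$k$, exploiting the fact that left-multiplication by~$Y_\infty$ annihilates the consensus mixing and makes the gradient-tracking update telescope. The single structural ingredient I would establish first is the invariance relation~$Y_\infty A = Y_\infty$. Since~$Y_\infty = (\mb{1}_n\bs{\pi}_r^\top)\otimes I_p$ and~$A = \ul{A}\otimes I_p$, the mixed-product property of the Kronecker product gives~$Y_\infty A = (\mb{1}_n\bs{\pi}_r^\top\ul{A})\otimes I_p$, and because~$\bs{\pi}_r^\top$ is the left Perron eigenvector of the row-stochastic matrix~$\ul{A}$ we have~$\bs{\pi}_r^\top\ul{A}=\bs{\pi}_r^\top$, whence~$Y_\infty A = Y_\infty$.

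For the base case~$k=0$ I would invoke the initialization directly: $\ul{Y}_0=I_n$ forces~$Y_0=I_{np}$ and hence~$\widetilde{Y}_0=\mbox{diag}(Y_0)=I_{np}$, while~$\mb{z}_0=\nabla\mb{f}(\mb{x}_0)$. Both sides therefore collapse to~$Y_\infty\nabla\mb{f}(\mb{x}_0)$ and the claim holds trivially at~$k=0$.

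For the inductive step, assume~$Y_\infty\mb{z}_k=Y_\infty\widetilde{Y}_k^{-1}\nabla\mb{f}(\mb{x}_k)$. Left-multiplying the~$\mb{z}$-update in Eq.~\eqref{alg1_md} by~$Y_\infty$ and substituting~$Y_\infty A = Y_\infty$ yields~$Y_\infty\mb{z}_{k+1}=Y_\infty\mb{z}_k+Y_\infty\widetilde{Y}_{k+1}^{-1}\nabla\mb{f}(\mb{x}_{k+1})-Y_\infty\widetilde{Y}_k^{-1}\nabla\mb{f}(\mb{x}_k)$. The inductive hypothesis cancels the first and last terms on the right-hand side, leaving precisely~$Y_\infty\mb{z}_{k+1}=Y_\infty\widetilde{Y}_{k+1}^{-1}\nabla\mb{f}(\mb{x}_{k+1})$, which is the assertion at index~$k+1$, closing the induction.

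I do not expect a genuine obstacle here: the only points requiring care are verifying~$Y_\infty A = Y_\infty$ and reading the initialization correctly so that~$\widetilde{Y}_0^{-1}=I_{np}$; once these are in place the proof reduces to a one-line telescoping cancellation at each step. An equivalent non-inductive route would be to left-multiply the update by~$Y_\infty$, sum from~$0$ to~$k-1$, and observe that all mixing terms collapse after repeated use of~$Y_\infty A=Y_\infty$, but the induction is the cleaner presentation.
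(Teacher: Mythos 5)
Your proof is correct and follows essentially the same route as the paper: both left-multiply the $\mb{z}$-update by $Y_\infty$, use $Y_\infty A = Y_\infty$, and let the gradient-tracking terms cancel down to the initialization $\mb{z}_0=\nabla\mb{f}(\mb{x}_0)$, $\widetilde{Y}_0=I_{np}$. The paper phrases this as an iterated telescoping sum rather than a formal induction, but the two presentations are equivalent, and your explicit verification of $Y_\infty A = Y_\infty$ is a detail the paper merely asserts.
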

\begin{proof}
	Recall that~$Y_\infty A=Y_\infty$. We obtain from Eq. \eqref{alg1_md} that
	\begin{align}
	Y_{\infty}\mb{z}_k&=Y_{\infty}\mb{z}_{k-1}+Y_\infty\widetilde{Y}_k^{-1}\nabla\mb{f}(\mb{x}_k)-Y_\infty\widetilde{Y}_{k-1}^{-1}\nabla\mb{f}(\mb{x}_{k-1}).\nonumber
	\end{align}
	Doing this iteratively, we have that
	\begin{align}
	Y_{\infty}\mb{z}_k=Y_{\infty}\mb{z}_0+Y_\infty\widetilde{Y}_k^{-1}\nabla\mb{f}(\mb{x}_k)-Y_\infty\widetilde{Y}_0^{-1}\nabla\mb{f}(\mb{x}_0).\nonumber
	\end{align}
With the initial conditions that~$\mb{z}_0=\nabla\mb{f}(\mb{x}_0)$ and~$\widetilde{Y}_0=I_{np}$, we complete the proof.
\end{proof}

The next lemma, a standard result in convex optimization theory from~\cite{bertsekas1999nonlinear}, states that the distance to the optimal solution contracts in each step in the centralized gradient method.
\vspace{-0.2cm}
\begin{lem}\label{centr_d}
	Let~$\mu$ and~$l$ be the strong-convexity and Lipschitz-continuity constants for the global objective function,~$F(\mb{x})$, respectively. Then~$\forall \mb{x}\in\mbb{R}^p$ and~$0<\alpha<\frac{2}{l}$, we have ~$$\left\|\mb{x}-\alpha\nabla F(\mb{x})-\mb{x}^*\right\|_2\leq\sigma_F\left\|\mb{x}-\mb{x}^*\right\|_2,$$ where~$\sigma_F=\max\left(\left|1-\alpha \mu\right|,\left|1-\alpha l \right|\right)$.
\end{lem}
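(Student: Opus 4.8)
The starting point is to exploit first-order optimality: since $\mb{x}^*$ is the global minimizer of the smooth objective $F$, we have $\nabla F(\mb{x}^*)=\mb{0}_p$. This lets me recast the quantity of interest as a \emph{difference} of gradients rather than a bare gradient,
\[
\mb{x}-\alpha\nabla F(\mb{x})-\mb{x}^*=(\mb{x}-\mb{x}^*)-\alpha\big(\nabla F(\mb{x})-\nabla F(\mb{x}^*)\big),
\]
so the whole estimate reduces to controlling the linear map sending $\mb{x}-\mb{x}^*$ to the right-hand side.

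The plan is to realize that map explicitly. Under Assumption~\ref{scvx} (reading the smoothness/strong-convexity constants as curvature bounds on a twice-differentiable $F$), the integral mean-value theorem yields $\nabla F(\mb{x})-\nabla F(\mb{x}^*)=H(\mb{x}-\mb{x}^*)$, where $H:=\int_0^1\nabla^2 F\big(\mb{x}^*+t(\mb{x}-\mb{x}^*)\big)\,dt$ is symmetric. Substituting gives $\mb{x}-\alpha\nabla F(\mb{x})-\mb{x}^*=(I_p-\alpha H)(\mb{x}-\mb{x}^*)$, whence by submultiplicativity of $\|\cdot\|_2$,
\[
\left\|\mb{x}-\alpha\nabla F(\mb{x})-\mb{x}^*\right\|_2\leq\left\|I_p-\alpha H\right\|_2\,\left\|\mb{x}-\mb{x}^*\right\|_2,
\]
and it remains only to bound $\|I_p-\alpha H\|_2$ by $\sigma_F$. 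For this I would invoke the spectral sandwich: $\mu$-strong convexity and the $l$-Lipschitz gradient are exactly the pointwise bounds $\mu I_p\preceq\nabla^2 F(\cdot)\preceq l I_p$, and since $H$ is an average of such Hessians the same bound holds, $\mu I_p\preceq H\preceq l I_p$. Hence every eigenvalue of the symmetric matrix $I_p-\alpha H$ lies in $[\,1-\alpha l,\,1-\alpha\mu\,]$, and because $|1-\alpha\lambda|$ is convex in $\lambda$ its maximum over this interval is attained at an endpoint, giving $\|I_p-\alpha H\|_2\leq\max(|1-\alpha\mu|,|1-\alpha l|)=\sigma_F$. Finally, $0<\alpha<2/l$ together with $\mu\leq l$ forces $|1-\alpha l|<1$ and $|1-\alpha\mu|<1$, so $\sigma_F<1$ and the gradient map is a genuine contraction, matching the "distance contracts each step" reading of the lemma.

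The step I expect to be the true obstacle is the appeal to the Hessian: Assumption~\ref{scvx} as written postulates only an $l$-Lipschitz gradient and $\mu$-strong convexity, not the existence of $\nabla^2 F$. The mean-value route above is the cleanest way to recover the \emph{exact} constant $\sigma_F$, since $\sigma_F$ is intrinsically the operator norm of the linearized step and is attained along the extreme curvature directions. To discharge the statement without assuming twice differentiability one must instead argue through monotone-operator inequalities, expanding $\|\mb{d}-\alpha\mb{g}\|_2^2$ with $\mb{d}=\mb{x}-\mb{x}^*$ and $\mb{g}=\nabla F(\mb{x})-\nabla F(\mb{x}^*)$ and combining strong monotonicity $\langle\mb{g},\mb{d}\rangle\geq\mu\|\mb{d}\|_2^2$ with co-coercivity $\langle\mb{g},\mb{d}\rangle\geq\tfrac{1}{l}\|\mb{g}\|_2^2$; this first-order route (or a mollification/limiting argument) is precisely the more delicate bookkeeping behind the cited result in~\cite{bertsekas1999nonlinear}, and recovering the sharp $\sigma_F$ rather than a slightly looser factor is where the care is required.
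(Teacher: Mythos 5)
Your argument is correct, but it is worth noting that the paper does not actually prove this lemma at all: it is invoked as a standard fact with a citation to \cite{bertsekas1999nonlinear}, so you have supplied a proof where the authors supplied a pointer. Your main route --- writing $\mb{x}-\alpha\nabla F(\mb{x})-\mb{x}^*=(I_p-\alpha H)(\mb{x}-\mb{x}^*)$ with $H=\int_0^1\nabla^2F(\mb{x}^*+t(\mb{x}-\mb{x}^*))\,dt$ and bounding the spectrum of the symmetric matrix $I_p-\alpha H$ by $[\,1-\alpha l,\,1-\alpha\mu\,]$ --- is sound and yields exactly $\sigma_F$, and you are right to flag that it needs twice differentiability, which Assumption~\ref{scvx} does not grant. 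Your fallback sketch is also pointed in the right direction, with one caveat you yourself anticipate: naively combining strong monotonicity $\langle\mb{g},\mb{d}\rangle\geq\mu\|\mb{d}\|_2^2$ with co-coercivity $\langle\mb{g},\mb{d}\rangle\geq\tfrac{1}{l}\|\mb{g}\|_2^2$ in a convex combination gives the slightly looser factor $\sqrt{1-2\alpha\mu+\alpha^2\mu l}$; the sharp $\sigma_F$ requires the combined inequality $\langle\mb{g},\mb{d}\rangle\geq\tfrac{\mu l}{\mu+l}\|\mb{d}\|_2^2+\tfrac{1}{\mu+l}\|\mb{g}\|_2^2$ together with a case split at $\alpha=\tfrac{2}{\mu+l}$, using $\|\mb{g}\|_2\geq\mu\|\mb{d}\|_2$ on one side and $\|\mb{g}\|_2\leq l\|\mb{d}\|_2$ on the other. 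For the purposes of this paper the sharpness is immaterial (only $\sigma_F<1$ for small $\alpha$ is used downstream in Lemma~\ref{2p} and Theorem~\ref{thm0}), so either version of your argument would serve.
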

With the help of the previous lemmas, we are ready to derive a crucial contraction relationship in the proposed algorithm. 

\subsection{Contraction relationship}
Our strategy to show convergence is to bound~$\|\mb{x}_{k+1}-Y_\infty\mb{x}_{k+1}\|$,~$\|Y_\infty\mb{x}_{k+1}-\mb{1}_n \otimes \mb{x}^*\|_2$, and~$\|\mb{z}_{k+1}-Y_\infty\mb{z}_{k+1}\|$ as a linear function of their values in the last iteration and~$\nabla\mb{f}(\mb{x}_k)$; this approach extends the work in~\cite{harness} on doubly-stochastic weights to row-stochastic weights. We will present this relationship in the next lemmas. Before we proceed, we note that since all vector norms are equivalent in~$\mathbb{R}^{np}$, there exist positive constants~$c,d$ such that:
$\|\cdot\|_2\leq c\|\cdot\|,\|\cdot\|\leq d\|\cdot\|_2.$
First, we derive a bound for~$\|\mb{x}_{k+1}-Y_\infty\mb{x}_{k+1}\|$, the consensus error of the agents.
\begin{lem} \label{1p}
	The following inequality holds,~$\forall k$:
	\begin{align}
	\|\mb{x}&_{k+1}-Y_\infty\mb{x}_{k+1}\|\leq\sigma\|\mb{x}_k-Y_\infty\mb{x}_k\|+\ol{\alpha} d\epsilon\|\mb{z}_k\|_2,
	\end{align}
	where~$d$ is the equivalence-norm constant such that~$\|\cdot\|\leq d\|\cdot\|_2$ and~$\ol{\alpha}$ is the largest step-size among the agents. 
\end{lem}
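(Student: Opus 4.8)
The plan is to substitute the~$\mb{x}$-update from Eq.~\eqref{alg1_ma} directly into the consensus error~$\mb{x}_{k+1}-Y_\infty\mb{x}_{k+1}$ and then exploit the invariance relations for~$Y_\infty$ that were already established. Writing~$\mb{x}_{k+1}=A\mb{x}_k-D\mb{z}_k$, I would compute
\begin{align*}
\mb{x}_{k+1}-Y_\infty\mb{x}_{k+1}
&=A\mb{x}_k-D\mb{z}_k-Y_\infty A\mb{x}_k+Y_\infty D\mb{z}_k.
\end{align*}
Here the key algebraic simplification is the identity~$Y_\infty A=Y_\infty$ (noted in the proof of Lemma~\ref{z}), which turns the term~$Y_\infty A\mb{x}_k$ into~$Y_\infty\mb{x}_k$. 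This collapses the expression to
\begin{align*}
\mb{x}_{k+1}-Y_\infty\mb{x}_{k+1}
&=\left(A\mb{x}_k-Y_\infty\mb{x}_k\right)-\left(I_{np}-Y_\infty\right)D\mb{z}_k,
\end{align*}
cleanly separating a contraction part from a perturbation part driven by the step-sizes.

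Next I would take the special norm~$\|\cdot\|$ of Lemma~\ref{row_ctrac} on both sides and apply the triangle inequality. The first term is handled immediately by Lemma~\ref{row_ctrac} with~$\mb{a}=\mb{x}_k$, giving~$\|A\mb{x}_k-Y_\infty\mb{x}_k\|\leq\sigma\|\mb{x}_k-Y_\infty\mb{x}_k\|$, which supplies the desired~$\sigma$-contraction in the first summand. The only remaining work is to bound the second summand~$\|(I_{np}-Y_\infty)D\mb{z}_k\|$ and express it in terms of~$\|\mb{z}_k\|_2$.

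For that term, the main care point is that the statement mixes two norms: the contraction norm~$\|\cdot\|$ on the left and the Euclidean norm~$\|\cdot\|_2$ on the factor~$\|\mb{z}_k\|_2$. I would first pass from~$\|\cdot\|$ to~$\|\cdot\|_2$ using the equivalence constant~$d$ (so~$\|\cdot\|\leq d\|\cdot\|_2$), then apply submultiplicativity of the spectral norm to write~$\|(I_{np}-Y_\infty)D\mb{z}_k\|_2\leq\|I_{np}-Y_\infty\|_2\,\|D\|_2\,\|\mb{z}_k\|_2$. Invoking the definition~$\epsilon=\|I_{np}-Y_\infty\|_2$ and observing that~$D=\mbox{diag}\{\bs{\alpha}\}\otimes I_p$ is diagonal with nonnegative entries so that~$\|D\|_2=\max_i\{\alpha_i\}=\ol{\alpha}$, the perturbation term is bounded by~$\ol{\alpha}\,d\,\epsilon\,\|\mb{z}_k\|_2$. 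Combining the two summands yields the claim exactly. The step I expect to require the most attention is precisely this norm bookkeeping—tracking where~$d$ enters from the norm conversion and confirming that the spectral norm of the Kronecker-structured step-size matrix~$D$ is indeed~$\ol{\alpha}$—rather than any deep argument, since the structural identity~$Y_\infty A=Y_\infty$ does all the heavy lifting.
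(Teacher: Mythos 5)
Your proof is correct and follows essentially the same route as the paper's: substitute the update from Eq.~\eqref{alg1_ma}, use $Y_\infty A=Y_\infty$ to split off the contraction term handled by Lemma~\ref{row_ctrac}, and bound the remaining term $(I_{np}-Y_\infty)D\mb{z}_k$ via the norm-equivalence constant $d$, the definition $\epsilon=\|I_{np}-Y_\infty\|_2$, and $\|D\|_2=\ol{\alpha}$. The paper compresses this into one line, but your expanded norm bookkeeping is exactly what that line implicitly relies on.
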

\begin{proof}
	Note that~$Y_\infty A=Y_\infty$. Using Eq.~\eqref{alg1_ma} and Lemma~\ref{row_ctrac}, we have
	\begin{align*}
	\|\mb{x}&_{k+1}-Y_\infty\mb{x}_{k+1}\| \nonumber\\
	=&~\left\|A\mb{x}_k-D\mb{z}_k-Y_{\infty}\left(A\mb{x}_k-D \mb{z}_k\right)\right\|\leq\sigma\|\mb{x}_k-Y_\infty\mb{x}_k\|+\ol{\alpha} d\epsilon\|\mb{z}_k\|_2,
	\end{align*}
	which completes the proof.
\end{proof}
Next, we derive a bound for~$\|Y_\infty\mb{x}_{k+1}-\mb{1}_n \otimes \mb{x}^*\|_2$, i.e., the optimality gap between the accumulation state of the network,~$Y_\infty\mb{x}_{k+1}$, and the optimal solution,~$\mb{1}_n \otimes \mb{x}^*$.
\begin{lem} \label{2p}
	If~$\bs{\pi}_r^\top\bs{\alpha}<\frac{2}{nl}$, the following inequality holds,~$\forall k$:
	\begin{align}\label{20}
	\|Y&_\infty\mb{x}_{k+1}-\mb{1}_n \otimes \mb{x}^*\|_2\nonumber\\
	\leq&~\overline{\alpha} \begin{color}{black}n\end{color}l c\|\mb{x}_k-Y_\infty\mb{x}_k\|
	+~\lambda\|Y_\infty\mb{x}_k-\mb{1}_n \otimes \mb{x}^*\|_2 \nonumber\\
	&+\overline{\alpha}yc\|\mb{z}_k-Y_\infty\mb{z}_k\| 
	+\overline{\alpha}\sqrt{n}ry\widetilde{y}^2\sigma^{k}\left\|\nabla\mb{f}(\mb{x}_k)\right\|_2, 
	\end{align}
	where~$\lambda=\max\left(\left|1-n\bs{\pi}_r^\top\bs{\alpha}\mu\right|,\left|1-n\bs{\pi}_r^\top\bs{\alpha}l \right|\right)$
	and~$c$ is the equivalence-norm constant such that~$\|\cdot\|_2\leq c\|\cdot\|$.
\end{lem}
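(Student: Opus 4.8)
The plan is to project the $\mb{x}$-update in Eq.~\eqref{alg1_ma} onto the consensus direction using $Y_\infty$ and to show that the resulting recursion for the accumulation state is, up to controllable error terms, a centralized gradient step to which Lemma~\ref{centr_d} applies. Since $Y_\infty A=Y_\infty$, left-multiplying Eq.~\eqref{alg1_ma} by $Y_\infty$ gives
\begin{equation*}
Y_\infty\mb{x}_{k+1}=Y_\infty\mb{x}_k-Y_\infty D\mb{z}_k.
\end{equation*}
I would split the gradient-tracking term as $Y_\infty D\mb{z}_k=Y_\infty DY_\infty\mb{z}_k+Y_\infty D(\mb{z}_k-Y_\infty\mb{z}_k)$, so that the second piece is precisely the $\mb{z}$-consensus error appearing in the statement, while the first piece will carry the descent.

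The crux is to recognize this first piece as a centralized gradient step. Using the Kronecker structure $Y_\infty=(\mb{1}_n\bs{\pi}_r^\top)\otimes I_p$ and $D=\mbox{diag}\{\bs{\alpha}\}\otimes I_p$, a direct computation yields the identity $Y_\infty DY_\infty=(\bs{\pi}_r^\top\bs{\alpha})Y_\infty$, since $\bs{\pi}_r^\top\mbox{diag}\{\bs{\alpha}\}\mb{1}_n=\bs{\pi}_r^\top\bs{\alpha}$. Then Lemma~\ref{z} replaces $Y_\infty\mb{z}_k$ by $Y_\infty\widetilde{Y}_k^{-1}\nabla\mb{f}(\mb{x}_k)$, and I would swap $\widetilde{Y}_k^{-1}$ for its limit $\widetilde{Y}_\infty^{-1}=(\mbox{diag}\{\bs{\pi}_r\})^{-1}\otimes I_p$, at the cost of an error bounded via Lemma~\ref{yy-}(a). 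The reason for this swap is the second key identity $Y_\infty\widetilde{Y}_\infty^{-1}=(\mb{1}_n\mb{1}_n^\top)\otimes I_p$, because $\bs{\pi}_r^\top(\mbox{diag}\{\bs{\pi}_r\})^{-1}=\mb{1}_n^\top$; hence $Y_\infty\widetilde{Y}_\infty^{-1}\nabla\mb{f}(\mb{x}_k)=\mb{1}_n\otimes\sum_{i=1}^n\nabla f_i(\mb{x}_k^i)$ recovers the \emph{unweighted} gradient sum, in contrast to the $\bs{\pi}_r$-weighted sum that plain row-stochastic consensus would produce.

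With $\wh{\mb{x}}_k$ denoting the accumulation state so that $Y_\infty\mb{x}_k=\mb{1}_n\otimes\wh{\mb{x}}_k$, I would write $\sum_i\nabla f_i(\mb{x}_k^i)=n\nabla F(\wh{\mb{x}}_k)+\sum_i\big(\nabla f_i(\mb{x}_k^i)-\nabla f_i(\wh{\mb{x}}_k)\big)$. The $n\nabla F(\wh{\mb{x}}_k)$ piece combines with $Y_\infty\mb{x}_k-\mb{1}_n\otimes\mb{x}^*$ to form $\mb{1}_n\otimes\big[\wh{\mb{x}}_k-n\bs{\pi}_r^\top\bs{\alpha}\,\nabla F(\wh{\mb{x}}_k)-\mb{x}^*\big]$, which is exactly a centralized gradient step with effective step-size $n\bs{\pi}_r^\top\bs{\alpha}$. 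Invoking Lemma~\ref{centr_d} — whose range hypothesis $0<n\bs{\pi}_r^\top\bs{\alpha}<\tfrac{2}{l}$ is precisely the assumed $\bs{\pi}_r^\top\bs{\alpha}<\tfrac{2}{nl}$ — contracts this term by $\lambda=\max(|1-n\bs{\pi}_r^\top\bs{\alpha}\mu|,|1-n\bs{\pi}_r^\top\bs{\alpha}l|)$. Using $\|\mb{1}_n\otimes\mb{v}\|_2=\sqrt{n}\|\mb{v}\|_2$, the factors of $\sqrt{n}$ cancel and leave the clean term $\lambda\|Y_\infty\mb{x}_k-\mb{1}_n\otimes\mb{x}^*\|_2$.

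Finally I would bound the three residual terms. The gradient-mismatch term is controlled by $l$-Lipschitzness together with $\sum_i\|\mb{x}_k^i-\wh{\mb{x}}_k\|_2\le\sqrt{n}\|\mb{x}_k-Y_\infty\mb{x}_k\|_2$ and $\|\cdot\|_2\le c\|\cdot\|$, which after using $\bs{\pi}_r^\top\bs{\alpha}\le\ol{\alpha}$ (valid since $\bs{\pi}_r^\top\mb{1}_n=1$) gives $\ol{\alpha}nlc\|\mb{x}_k-Y_\infty\mb{x}_k\|$. The $\widetilde{Y}^{-1}$-swap term is bounded by $\|Y_\infty\|_2\le y$ and Lemma~\ref{yy-}(a), giving $\ol{\alpha}\sqrt{n}ry\widetilde{y}^2\sigma^{k}\|\nabla\mb{f}(\mb{x}_k)\|_2$; and the $\mb{z}$-consensus term by $\|Y_\infty D\|_2\le\ol{\alpha}y$ and $\|\cdot\|_2\le c\|\cdot\|$, giving $\ol{\alpha}yc\|\mb{z}_k-Y_\infty\mb{z}_k\|$. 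I expect the main obstacle to be the bookkeeping that turns the two Kronecker identities and Lemma~\ref{z} into a genuine centralized step of size $n\bs{\pi}_r^\top\bs{\alpha}$ — in particular, seeing that the step-size hypothesis is exactly what activates Lemma~\ref{centr_d} and produces the stated $\lambda$, and tracking the $\sqrt{n}$ factors carefully so that they cancel.
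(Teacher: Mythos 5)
Your proposal is correct and follows essentially the same route as the paper's proof: the same split via $Y_\infty D\mb{z}_k=Y_\infty DY_\infty\mb{z}_k+Y_\infty D(\mb{z}_k-Y_\infty\mb{z}_k)$, the same identities $Y_\infty DY_\infty=(\bs{\pi}_r^\top\bs{\alpha})Y_\infty$ and $Y_\infty\widetilde{Y}_\infty^{-1}=(\mb{1}_n\mb{1}_n^\top)\otimes I_p$, and the same four bounds via Lemmas~\ref{z}, \ref{yy-}, and \ref{centr_d}. The only difference is cosmetic ordering of the error-term decomposition, so no further changes are needed.
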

\begin{proof}
	Recalling that $Y_{\infty}=(\mb{1}_n \bs{\pi}_r^\top)\otimes I_p$ and~$Y_\infty A=Y_\infty$, 
	We have the following:
	\begin{align}\label{21}
	\|Y&_\infty\mb{x}_{k+1}-\mb{1}_n \otimes \mb{x}^*\|_2 \nonumber\\
	=&~ \left\|Y_{\infty}\Big(A\mb{x}_k-D \mb{z}_k+(D-D)Y_\infty\mb{z}_k\Big)-\mb{1}_n \otimes \mb{x}^*\right\|_2, \nonumber\\
	\leq&
	~\left\|Y_\infty\mb{x}_k-Y_\infty DY_\infty\mb{z}_k-\mb{1}_n \otimes \mb{x}^*\right\|_2+\overline{\alpha}yc\|\mb{z}_k-Y_\infty\mb{z}_k\|.
	\end{align}
	Since the last term in the inequality above matches the second last term in Eq.~\eqref{20}, we only need to handle the first term. We further note that:
	\begin{align}
	Y_\infty DY_\infty &= \Big((\mb{1}_n\bs{\pi}_r^\top)\otimes I_p\Big)\Big(\mbox{diag}\{\bs{\alpha}\} \otimes I_p\Big)\Big((\mb{1}_n\bs{\pi}_r^\top)\otimes I_p\Big)=(\bs{\pi}_r^\top\bs{\alpha})Y_\infty. \nonumber
	\end{align}
	Now, we derive a upper bound for the first term in Eq.~\eqref{21},
	\begin{align}
	\|Y&_\infty\mb{x}_k-Y_\infty DY_\infty\mb{z}_k-\mb{1}_n \otimes \mb{x}^*\|_2\nonumber\\
	\leq&~ \left\|(\mb{1}_n \otimes I_p)\Big((\bs{\pi}_r^\top\otimes I_p)\mb{x}_k-\mb{x}^*-n(\bs{\pi}_r^\top\bs{\alpha})\nabla F\big((\bs{\pi}_r^\top\otimes I_p)\mb{x}_k\big)\Big)\right\|_2 \nonumber\\
	&+~\left\|n(\bs{\pi}_r^\top\bs{\alpha})(\mb{1}_n \otimes I_p)\nabla F\big((\bs{\pi}_r^\top\otimes I_p)\mb{x}_k\big)-(\bs{\pi}_r^\top\bs{\alpha})Y_\infty\mb{z}_k\right\|_2,\nonumber\\
	:=& ~s_1 +  s_2.
	\end{align}
	If~$\bs{\pi}_r^\top\bs{\alpha}<\frac{2}{nl}$, according to Lemma~\ref{centr_d},
	\begin{align}
	s_1\leq\lambda\|Y_\infty\mb{x}_k-\mb{1}_n \otimes \mb{x}^*\|_2,
	\end{align}
	where~$\lambda=\max\left(\left|1-n\bs{\pi}_r^\top\bs{\alpha}\mu\right|,\left|1-n\bs{\pi}_r^\top\bs{\alpha}l \right|\right)$.
	Next we derive a bound for~$s_2$.
	\begin{align}\label{22}
	s_2 =&~(\bs{\pi}_r^\top\bs{\alpha})\left\|n(\mb{1}_n \otimes I_p)\nabla F\big((\bs{\pi}_r^\top\otimes I_p)\mb{x}_k\big)
	-Y_{\infty}\mb{z}_k\right\|_2, \nonumber\\
	\leq
	&~\overline{\alpha}\left\|n(\mb{1}_n \otimes I_p)\nabla F\big((\bs{\pi}_r^\top\otimes I_p)\mb{x}_k\big)
	-(\mb{1}_n\otimes I_p)(\mb{1}_n^\top\otimes I_p)\nabla\mb{f}(\mb{x}_k)\right\|_2, \nonumber\\
	&+~\overline{\alpha}\left\|(\mb{1}_n\otimes I_p)(\mb{1}_n^\top\otimes I_p)\nabla\mb{f}(\mb{x}_k)- Y_{\infty}\mb{z}_k\right\|_2\nonumber\\
	:=&~s_3+s_4, 
	\end{align}
	where it is straightforward to bound~$s_3$ as
	\begin{equation}\label{23}
	s_3 \leq \overline{\alpha} \begin{color}{black}n\end{color}l c\|\mb{x}_k-Y_\infty\mb{x}_k\|.
	\end{equation}
	Since~$Y_{\infty}\widetilde{Y}_{\infty}^{-1}=\left(\mb{1}_n\mb{1}_n^\top\right)\otimes I_p$ and~$Y_{\infty}\mb{z}_k=Y_\infty\widetilde{Y}_k^{-1}\nabla\mb{f}(\mb{x}_k)$ from Lemma~\ref{z}, we have:
	\begin{align}\label{24}
	s_4 =   \overline{\alpha}\left\|Y_{\infty}\widetilde{Y}_{\infty}^{-1}\nabla\mb{f}(\mb{x}_k)-Y_\infty\widetilde{Y}_k^{-1}\nabla\mb{f}(\mb{x}_k)\right\|_2 
	\leq\overline{\alpha}\sqrt{n}ry\widetilde{y}^2\sigma^{k}\left\|\nabla\mb{f}(\mb{x}_k)\right\|_2,
	\end{align}
	where we use Lemma~\ref{yy-}. 
	Combining Eqs.~\eqref{21}-\eqref{24}, we finish the proof.
\end{proof}
Next, we bound~$\|\mb{z}_{k+1}-Y_\infty\mb{z}_{k+1}\|$, the error in gradient estimation.
\begin{lem} \label{3p}
	The following inequality holds,~$\forall k$:
	\begin{align*}
	\|\mb{z}&_{k+1}-Y_\infty\mb{z}_{k+1}\|\nonumber\\ 
	\leq&~ \begin{color}{black}\epsilon\widetilde{y}l\tau cd\end{color}\|\mb{x}_k-Y_\infty\mb{x}_k\|+
	\sigma\|\mb{z}_k-Y_\infty\mb{z}_k\|+\ol{\alpha}\begin{color}{black}\epsilon \widetilde{y} l d\end{color}\|\mb{z}_k\|_2 \nonumber\\
	&+2d\sqrt{n}r\epsilon \widetilde{y}^2\sigma^{k}\|\nabla\mb{f}(\mb{x}_k)\|_2.
	\end{align*}
\end{lem}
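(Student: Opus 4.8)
The plan is to start from the $\mathbf{z}$-update in Eq.~\eqref{alg1_md} and isolate the gradient-estimation error $\mathbf{z}_{k+1}-Y_\infty\mathbf{z}_{k+1}$. Since $Y_\infty A = Y_\infty$ and $Y_\infty$ is idempotent, applying $(I_{np}-Y_\infty)$ to the recursion $\mathbf{z}_{k+1}=A\mathbf{z}_k+\widetilde{Y}_{k+1}^{-1}\nabla\mathbf{f}(\mathbf{x}_{k+1})-\widetilde{Y}_k^{-1}\nabla\mathbf{f}(\mathbf{x}_k)$ gives a term $A\mathbf{z}_k-Y_\infty\mathbf{z}_k$ that Lemma~\ref{row_ctrac} contracts to $\sigma\|\mathbf{z}_k-Y_\infty\mathbf{z}_k\|$, plus a gradient-difference term premultiplied by $(I_{np}-Y_\infty)$ whose norm is bounded by $\epsilon=\|I_{np}-Y_\infty\|_2$. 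So the skeleton is clear: one contraction term, and the rest comes from controlling

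$$\bigl\|\widetilde{Y}_{k+1}^{-1}\nabla\mathbf{f}(\mathbf{x}_{k+1})-\widetilde{Y}_k^{-1}\nabla\mathbf{f}(\mathbf{x}_k)\bigr\|_2.$$

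The main work is splitting this gradient-difference quantity. I would add and subtract a cross term, writing it as $\widetilde{Y}_{k+1}^{-1}\bigl(\nabla\mathbf{f}(\mathbf{x}_{k+1})-\nabla\mathbf{f}(\mathbf{x}_k)\bigr)+\bigl(\widetilde{Y}_{k+1}^{-1}-\widetilde{Y}_k^{-1}\bigr)\nabla\mathbf{f}(\mathbf{x}_k)$. The second piece is immediately handled by Lemma~\ref{yy-}(b), contributing the final $2d\sqrt{n}r\epsilon\widetilde{y}^2\sigma^k\|\nabla\mathbf{f}(\mathbf{x}_k)\|_2$ term (the factor $d$ coming from the norm-equivalence $\|\cdot\|\le d\|\cdot\|_2$, and $\epsilon$ from the $(I_{np}-Y_\infty)$ prefactor). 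For the first piece I would use $\|\widetilde{Y}_{k+1}^{-1}\|_2\le\widetilde{y}$ and the Lipschitz bound $\|\nabla\mathbf{f}(\mathbf{x}_{k+1})-\nabla\mathbf{f}(\mathbf{x}_k)\|_2\le l\|\mathbf{x}_{k+1}-\mathbf{x}_k\|_2$, reducing everything to bounding the consecutive-iterate difference $\|\mathbf{x}_{k+1}-\mathbf{x}_k\|_2$.

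The crux is therefore estimating $\|\mathbf{x}_{k+1}-\mathbf{x}_k\|_2$ in terms of the quantities appearing on the right-hand side, namely $\|\mathbf{x}_k-Y_\infty\mathbf{x}_k\|$ and $\|\mathbf{z}_k\|_2$. From Eq.~\eqref{alg1_ma}, $\mathbf{x}_{k+1}-\mathbf{x}_k=(A-I_{np})\mathbf{x}_k-D\mathbf{z}_k$. The trick is that $(A-I_{np})Y_\infty=Y_\infty-Y_\infty=0$, so $(A-I_{np})\mathbf{x}_k=(A-I_{np})(\mathbf{x}_k-Y_\infty\mathbf{x}_k)$, whose $2$-norm is at most $\tau\|\mathbf{x}_k-Y_\infty\mathbf{x}_k\|_2\le\tau c\|\mathbf{x}_k-Y_\infty\mathbf{x}_k\|$, where $\tau=\|A-I_{np}\|_2$ and $c$ is the norm-equivalence constant $\|\cdot\|_2\le c\|\cdot\|$. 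The remaining $\|D\mathbf{z}_k\|_2\le\overline{\alpha}\|\mathbf{z}_k\|_2$ feeds the $\|\mathbf{z}_k\|_2$ term. Tracking the constants, the consensus-error contribution picks up $\epsilon\widetilde{y}l\tau cd$ and the $\|\mathbf{z}_k\|_2$ contribution picks up $\overline{\alpha}\epsilon\widetilde{y}ld$, matching the stated coefficients.

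I expect the main obstacle to be bookkeeping rather than any conceptual leap: carefully carrying the interleaved factors of $\epsilon$, $\widetilde{y}$, $c$, $d$, and $\tau$ through the three sub-terms without losing track of which norm ($\|\cdot\|$ versus $\|\cdot\|_2$) each bound lives in, and invoking the equivalence constants in exactly the right places so the final inequality is stated consistently in the mixed-norm form used throughout. The one subtlety worth double-checking is the use of $(A-I_{np})Y_\infty=0$, which is what converts a raw $\|\mathbf{x}_k\|$ bound into the much sharper consensus-error bound $\|\mathbf{x}_k-Y_\infty\mathbf{x}_k\|$; without this observation the lemma would not close, since an un-centered $\mathbf{x}_k$ term does not appear on the right-hand side.
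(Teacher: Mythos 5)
Your proposal is correct and follows essentially the same route as the paper's proof: apply the contraction of Lemma~\ref{row_ctrac} to the $A\mb{z}_k$ part, isolate the gradient-difference term with an $(I_{np}-Y_\infty)$ prefactor (the paper reaches this via Lemma~\ref{z}, you reach it by applying $(I_{np}-Y_\infty)$ directly and using $Y_\infty A=Y_\infty$ — the same identity), split it as $\widetilde{Y}_{k+1}^{-1}\bigl(\nabla\mb{f}(\mb{x}_{k+1})-\nabla\mb{f}(\mb{x}_k)\bigr)+\bigl(\widetilde{Y}_{k+1}^{-1}-\widetilde{Y}_k^{-1}\bigr)\nabla\mb{f}(\mb{x}_k)$, invoke Lemma~\ref{yy-}(b) and Lipschitz continuity, and close with $\|\mb{x}_{k+1}-\mb{x}_k\|_2\leq\tau\|\mb{x}_k-Y_\infty\mb{x}_k\|_2+\ol{\alpha}\|\mb{z}_k\|_2$ via $(A-I_{np})Y_\infty=0$. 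The constants and norm-equivalence factors you track all match the stated coefficients.
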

\begin{proof}
	According to Eq.~\eqref{alg1_md} and Lemma~\ref{row_ctrac}, we have
	\begin{align}\label{31}
	\|\mb{z}&_{k+1}-Y_{\infty}\mb{z}_{k+1}\| \nonumber\\
	\leq&~\sigma\left\|\mb{z}_k-Y_{\infty}\mb{z}_k\right\|+\left\|\left(\widetilde{Y}_{k+1}^{-1}\nabla \mb{f}(\mb{x}_k)-\widetilde{Y}_k^{-1}\nabla \mb{f}(\mb{x}_k)\right)-\left(Y_{\infty}\mb{z}_{k+1}-Y_{\infty}\mb{z}_k\right)\right\|.
	\end{align}
	Note that~$Y_{\infty}\mb{z}_k=Y_\infty\widetilde{Y}_k^{-1}\nabla\mb{f}(\mb{x}_k)$ from Lemma~\ref{z}. Therefore,
	\begin{align}\label{32}
	&~\left\|\left(\widetilde{Y}_{k+1}^{-1}\nabla \mb{f}(\mb{x}_k)-\widetilde{Y}_k^{-1}\nabla \mb{f}(\mb{x}_k)\right)-\left(Y_{\infty}\mb{z}_{k+1}-Y_{\infty}\mb{z}_k\right)\right\|_2\nonumber\\
	=&~\left\|\left(I_{np}-Y_\infty\right)\left(\widetilde{Y}_{k+1}^{-1}\nabla \mb{f}(\mb{x}_k)-\widetilde{Y}_k^{-1}\nabla \mb{f}(\mb{x}_k)\right)\right\|_2,\nonumber\\
	\leq&~\epsilon\left\|\widetilde{Y}_{k+1}^{-1}\nabla \mb{f}(\mb{x}_k)-\widetilde{Y}_{k+1}^{-1}\nabla\mb{f}(\mb{x}_k)\right\|_2 +\epsilon\left\|\widetilde{Y}_{k+1}^{-1}\nabla\mb{f}(\mb{x}_k)-\widetilde{Y}_k^{-1}\nabla\mb{f}(\mb{x}_k)\right\|_2,\nonumber\\
	\leq&~\epsilon\widetilde{y}l\left\|\mb{x}_{k+1}-\mb{x}_k\right\|_2+2\sqrt{n}r\epsilon \widetilde{y}^2\sigma^{k}\left\|\nabla\mb{f}(\mb{x}_k)\right\|_2,
	\end{align}
	where in the last inequality we use Lemma~\ref{yy-}.
	We now bound~$\|\mb{x}_{k+1}-\mb{x}_k\|_2$. 
	\begin{align}\label{33}
	\left\|\mb{x}_{k+1}-\mb{x}_k\right\|_2\leq&\left\|(A-I_{np})\mb{x}_k\right\|_2+\ol{\alpha}\left\|\mb{z}_k\right\|_2,\nonumber\\
	\leq&\left\|(A-I_{np})\left(\mb{x}_k-Y_{\infty}\mb{x}_k\right)\right\|_2+\ol{\alpha}\left\|\mb{z}_k\right\|_2,\nonumber\\
	\leq&\tau\left\|\mb{x}_k-Y_{\infty}\mb{x}_k\right\|_2+\ol{\alpha}\left\|\mb{z}_k\right\|_2,
	\end{align}
	where in the second inequality we use the fact that~$(A-I_{np})Y_{\infty}$ is a zero matrix.
	Combining Eqs.~\eqref{31}-\eqref{33}, we obtain the desired result.
\end{proof}
The last step is to bound~$\|\mb{z}_k\|_2$ in terms of~$\|\mb{x}_k-Y_\infty\mb{x}_k\|$,~$\|Y_\infty\mb{x}_k-\mb{1}_n \otimes \mb{x}^*\|_2$, and~$\|\mb{z}_k-Y_\infty\mb{z}_k\|$. Then we can replace~$\|\mb{z}_k\|_2$ in Lemma~\ref{1p} and~\ref{3p} by this bound in order to develop a LTI system inequality.
\begin{lem}\label{4p}
	The following inequality holds,~$\forall k$:
	\begin{align}
	\|\mb{z}_k\|_2 \leq&~ cnl\|\mb{x}_k-Y_{\infty}\mb{x}_k\|+nl\|Y_{\infty}\mb{x}_k-\mb{1}_n\otimes\mb{x}^*\|_2\nonumber\\
	&+c\|\mb{z}_k-Y_{\infty}\mb{z}_k\|+\sqrt{n}ry\widetilde{y}^2\sigma^{k}\|\nabla\mb{f}(\mb{x}_k)\|_2.
	\end{align}
\end{lem}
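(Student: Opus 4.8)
The plan is to peel $\mb{z}_k$ apart along the consensus direction and bound each piece separately. First I would apply the triangle inequality, $\|\mb{z}_k\|_2\leq\|\mb{z}_k-Y_\infty\mb{z}_k\|_2+\|Y_\infty\mb{z}_k\|_2$, and immediately convert the first summand through the norm-equivalence constant $c$ (where $\|\cdot\|_2\leq c\|\cdot\|$) into $c\|\mb{z}_k-Y_\infty\mb{z}_k\|$, which is already the third term of the claimed bound. Everything else reduces to estimating $\|Y_\infty\mb{z}_k\|_2$.

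For this quantity I would invoke Lemma~\ref{z} to write $Y_\infty\mb{z}_k=Y_\infty\widetilde{Y}_k^{-1}\nabla\mb{f}(\mb{x}_k)$, and then add and subtract the limiting diagonal term, $Y_\infty\widetilde{Y}_k^{-1}\nabla\mb{f}(\mb{x}_k)=Y_\infty\widetilde{Y}_\infty^{-1}\nabla\mb{f}(\mb{x}_k)+Y_\infty(\widetilde{Y}_k^{-1}-\widetilde{Y}_\infty^{-1})\nabla\mb{f}(\mb{x}_k)$. The perturbation piece is dispatched by submultiplicativity together with $\|Y_\infty\|_2\leq y$ and Lemma~\ref{yy-}(a), i.e.\ $\|\widetilde{Y}_k^{-1}-\widetilde{Y}_\infty^{-1}\|_2\leq\sqrt{n}r\widetilde{y}^2\sigma^{k}$, producing exactly the transient term $\sqrt{n}ry\widetilde{y}^2\sigma^{k}\|\nabla\mb{f}(\mb{x}_k)\|_2$.

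The crux is the leading term $Y_\infty\widetilde{Y}_\infty^{-1}\nabla\mb{f}(\mb{x}_k)$. Here I would use the identity $Y_\infty\widetilde{Y}_\infty^{-1}=(\mb{1}_n\mb{1}_n^\top)\otimes I_p$ (already recorded in the proof of Lemma~\ref{2p}) and the key observation that this matrix annihilates $\nabla\mb{f}(\mb{x}^*)$, since $\big((\mb{1}_n\mb{1}_n^\top)\otimes I_p\big)\nabla\mb{f}(\mb{x}^*)=\mb{1}_n\otimes\sum_{i=1}^n\nabla f_i(\mb{x}^*)=\mb{0}$ by the first-order optimality condition $\sum_{i=1}^n\nabla f_i(\mb{x}^*)=n\nabla F(\mb{x}^*)=\mb{0}$. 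This lets me replace $\nabla\mb{f}(\mb{x}_k)$ by $\nabla\mb{f}(\mb{x}_k)-\nabla\mb{f}(\mb{x}^*)$ at no cost, then use $\|(\mb{1}_n\mb{1}_n^\top)\otimes I_p\|_2=n$ and the $l$-Lipschitz continuity of the gradients (summing the per-block bounds) to obtain $\|Y_\infty\widetilde{Y}_\infty^{-1}\nabla\mb{f}(\mb{x}_k)\|_2\leq nl\|\mb{x}_k-\mb{1}_n\otimes\mb{x}^*\|_2$.

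Finally I would split the optimality distance as $\|\mb{x}_k-\mb{1}_n\otimes\mb{x}^*\|_2\leq\|\mb{x}_k-Y_\infty\mb{x}_k\|_2+\|Y_\infty\mb{x}_k-\mb{1}_n\otimes\mb{x}^*\|_2$ and convert the first summand via $c$ into $c\|\mb{x}_k-Y_\infty\mb{x}_k\|$, which yields the two terms $cnl\|\mb{x}_k-Y_\infty\mb{x}_k\|$ and $nl\|Y_\infty\mb{x}_k-\mb{1}_n\otimes\mb{x}^*\|_2$. Collecting the four contributions gives the claim. The main obstacle is the single conceptual step of injecting $\nabla\mb{f}(\mb{x}^*)$ through optimality so that Lipschitz continuity can be brought to bear; once that is in place, the remainder is bookkeeping with the norm-equivalence constants and the Kronecker identities supplied by the earlier lemmas.
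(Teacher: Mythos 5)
Your proposal is correct and follows essentially the same route as the paper's own proof: the same triangle-inequality split of $\mb{z}_k$ along $Y_\infty$, the same use of Lemma~\ref{z} and Lemma~\ref{yy-}(a) for the transient term, the same injection of $\nabla\mb{f}(\mb{x}^*)$ via the optimality condition $(\mb{1}_n^\top\otimes I_p)\nabla\mb{f}(\mb{x}^*)=\mb{0}$ to invoke Lipschitz continuity, and the same final consensus/optimality split with the constant $c$. (Your constant $\sqrt{n}ry\widetilde{y}^2$ in the transient term is in fact more consistent than the paper's intermediate line, which momentarily writes $l$ in place of $r$.)
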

\begin{proof}
	Recall that~$Y_{\infty}\widetilde{Y}_{\infty}^{-1}=(\mb{1}_n\otimes I_p)(\mb{1}_n^\top\otimes I_p)$
	and~$Y_{\infty}\mb{z}_k=Y_\infty\widetilde{Y}_k^{-1}\nabla\mb{f}(\mb{x}_k)$ from Lemma~\ref{z}. We have the following:
	\begin{align}
	\|\mb{z}_k\|_2\leq&~\left\|\mb{z}_k-Y_{\infty}\mb{z}_k\|_2+\|Y_{\infty}\mb{z}_k\right\|_2 \nonumber\\
	\leq&~ c\|\mb{z}_k-Y_{\infty}\mb{z}_k\| + \|Y_\infty\widetilde{Y}_k^{-1}\nabla\mb{f}(\mb{x}_k)-Y_\infty\widetilde{Y}_{\infty}^{-1}\nabla\mb{f}(\mb{x}_k)\|_2
	\nonumber\\
	&~+\|Y_\infty\widetilde{Y}_{\infty}^{-1}\nabla\mb{f}(\mb{x}_k)-(\mb{1}_n\otimes I_p)(\mb{1}_n^\top\otimes I_p)\nabla\mb{f}(\mb{x}^*)\|_2, \nonumber\\
	\leq&~c\|\mb{z}_k-Y_{\infty}\mb{z}_k\|+\sqrt{n}ly\widetilde{y}^2\sigma^{k}\|\nabla\mb{f}(\mb{x}_k)\|_2		\nonumber\\
	&~+nl\|\mb{x}_k-\mb{1}_n\otimes\mb{x}^*\|_2,		\nonumber\\
	\leq&~
	cnl\|\mb{x}_k-Y_{\infty}\mb{x}_k\|+nl\|Y_{\infty}\mb{x}_k-\mb{1}_n\otimes\mb{x}^*\|_2\nonumber\\
	&~+c\|\mb{z}_k-Y_{\infty}\mb{z}_k\|+\sqrt{n}ry\widetilde{y}^2\sigma^{k}\|\nabla\mb{f}(\mb{x}_k)\|_2,
	\end{align}
	where in the second inequality we use the fact that~$(\mb{1}_n^\top\otimes I_p)\nabla\mb{f}(\mb{x}^*)=0$, which is the optimality condition for Problem P1.
\end{proof}
Before the main result, we present an additional lemma from nonnegative matrix theory that will be helpful in establishing the linear convergence of FROST.
\begin{lem}\label{rho1}(Theorem 8.1.29 in~\cite{matrix})
	Let $X\in\mathbb{R}^{n\times n}$ be a nonnegative matrix and~$\mb{x}\in\mathbb{R}^{n}$ be a positive vector. If~$X\mb{x}<\omega\mb{x}$, then~$\rho(X)<\omega$. 
\end{lem}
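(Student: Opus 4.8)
The last item in the excerpt is Lemma~\ref{rho1}, which is explicitly cited as ``Theorem 8.1.29 in~\cite{matrix}.'' It is a known result quoted from a standard reference, so the intended ``proof'' is simply a citation. I therefore sketch how one would prove it from scratch, were a self-contained argument required.

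\medskip

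The plan is to argue by contradiction using the Perron--Frobenius theory for nonnegative matrices. Since~$X\in\mathbb{R}^{n\times n}$ is nonnegative, its spectral radius~$\rho(X)$ is itself an eigenvalue of~$X$, and by the Perron--Frobenius theorem there exists a nonnegative left eigenvector~$\mb{w}\geq\mb{0}$, $\mb{w}\neq\mb{0}$, such that~$\mb{w}^\top X=\rho(X)\mb{w}^\top$. First I would left-multiply the strict inequality~$X\mb{x}<\omega\mb{x}$ (interpreted entrywise) by this~$\mb{w}^\top$. Because~$\mb{w}\geq\mb{0}$ is nonnegative and nonzero while~$\mb{x}>\mb{0}$ is strictly positive, the product~$\mb{w}^\top\mb{x}$ is strictly positive, and multiplying a strict entrywise inequality between vectors by a nonzero nonnegative row vector preserves strictness precisely because at least one coordinate of~$\mb{w}$ is positive and the corresponding coordinate inequality is strict.

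\medskip

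The key computation is then
\begin{equation}
\rho(X)\,\mb{w}^\top\mb{x}=\big(\mb{w}^\top X\big)\mb{x}=\mb{w}^\top\big(X\mb{x}\big)<\mb{w}^\top\big(\omega\mb{x}\big)=\omega\,\mb{w}^\top\mb{x}.\nonumber
\end{equation}
Dividing through by the strictly positive scalar~$\mb{w}^\top\mb{x}$ yields~$\rho(X)<\omega$, which is exactly the claim.

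\medskip

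The only delicate point — and the step I would treat most carefully — is justifying that the entrywise strict inequality survives the left-multiplication by~$\mb{w}^\top$. This is where nonnegativity of~$X$ (hence existence of a nonnegative Perron eigenvector) and strict positivity of~$\mb{x}$ both enter essentially: if~$\mb{w}$ were allowed to be negative, or if~$\mb{x}$ were merely nonnegative, the scalar inequality could fail or degenerate. The cleanest way to handle this is to note that~$\omega\mb{x}-X\mb{x}$ is a strictly positive vector, so its inner product with the nonzero nonnegative vector~$\mb{w}$ is strictly positive, giving~$\mb{w}^\top(\omega\mb{x}-X\mb{x})>0$; the rest is the algebraic rearrangement above. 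Since this is a standard result, in the paper I would simply defer to the citation rather than reproduce this argument.
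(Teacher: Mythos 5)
The paper gives no proof of this lemma at all: it is quoted directly as Theorem 8.1.29 of~\cite{matrix}, so there is no in-paper argument to compare yours against, and deferring to the citation (as you say you would) matches what the authors do. Your sketch is nonetheless a correct self-contained proof: every nonnegative matrix admits a nonnegative, nonzero left eigenvector $\mb{w}$ for the eigenvalue $\rho(X)$; since $\omega\mb{x}-X\mb{x}$ is entrywise positive and $\mb{x}>\mb{0}$, both $\mb{w}^\top(\omega\mb{x}-X\mb{x})$ and $\mb{w}^\top\mb{x}$ are strictly positive, and the identity $\rho(X)\,\mb{w}^\top\mb{x}=\mb{w}^\top X\mb{x}<\omega\,\mb{w}^\top\mb{x}$ delivers $\rho(X)<\omega$ after division. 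You correctly isolate the one delicate point (preservation of strictness under left-multiplication by $\mb{w}^\top$). For completeness, a slightly more elementary route that avoids invoking a Perron eigenvector for a possibly reducible matrix is to set $D=\mbox{diag}\{\mb{x}\}$ and observe that $\rho(X)=\rho(D^{-1}XD)\leq\big\|D^{-1}XD\big\|_{\infty}=\max_i [X\mb{x}]_i/[\mb{x}]_i<\omega$; either argument is standard and acceptable.
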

\subsection{Main results}
With the help of the auxiliary relationships developed in the previous subsection, we now present the main results as follows in Theorems~\ref{thm0} and~\ref{thm1}. Theorem~\ref{thm0} states that the relationships derived in the previous subsection indeed provide a contraction when the largest step-size,~$\overline{\alpha}$, is sufficiently small. Theorem~\ref{thm1} then establishes the linear convergence of FROST.
\begin{theorem}\label{thm0}
	If~$\bs{\pi}_r^\top\bs{\alpha}<\frac{2}{nl}$, the following LTI system inequality holds:
	\begin{equation}
	\mb{t}_{k+1} \leq J_{\bs{\alpha}}\mb{t}_k+H_k\mb{s}_k,~\forall k,\label{JtHs}
	\end{equation}
	where $\mb{t}_k,\mb{s}_k\in\mathbb{R}^3$ and $J_{\bs{\alpha}},H_k\in\mathbb{R}^{3\times3}$ are defined as follows: 
	\begin{align*}
	\mb{t}_k&=\left[
	\begin{array}{l}
	\left\|\mb{x}_k-Y_\infty\mb{x}_k\right\| \\
	\left\|Y_\infty\mb{x}_k-\mb{1}_n \otimes \mb{x}^*\right\|_2 \\
	\left\|\mb{z}_k-Y_\infty\mb{z}_k\right\|
	\end{array}
	\right],\qquad
	J_{\bs{\alpha}}=\left[
	\begin{array}{ccc}
	\sigma+a_1\overline{\alpha} & a_2\overline{\alpha} &a_3\overline{\alpha}\\
	a_4\overline{\alpha} & \lambda & a_5\overline{\alpha}\\
	a_6+a_7\overline{\alpha}& a_8\overline{\alpha} & \sigma+a_9\overline{\alpha}
	\end{array}
	\right],\\
	H_k&=\left[
	\begin{array}{ccc}
	\overline{\alpha} d\epsilon\sqrt{n}ry\widetilde{y}^2 & 0 & 0\\
	\overline{\alpha}\sqrt{n}ry\widetilde{y}^2 & 0 & 0\\
	d\sqrt{n}r\epsilon\widetilde{y}^2\left(2+\overline{\alpha} r y\widetilde{y}\right) & 0 & 0
	\end{array}
	\right]\sigma^k,
	\qquad\mb{s}_k=\left[
	\begin{array}{cc}
	\left\|\nabla\mb{f}(\mb{x}_k)\right\|_2\\
	0\\
	0\\
	\end{array}
	\right],
	\end{align*}
	and the constants $a_i$'s are
	\begin{eqnarray*}
		\begin{array}{lll}
			a_1 = cd\epsilon nl, & 	\qquad a_4 = cnl  & \qquad a_7 = cdnl^2\epsilon\widetilde{y}\\
			a_2 = d\epsilon nl, & \qquad a_5 = yc, &\qquad a_8 = dnl^2\epsilon\widetilde{y}\\
			a_3 = d^2\epsilon, & \qquad
			a_6 = \epsilon\widetilde{y}l\tau cd,& \qquad
			a_{9} = d^2\epsilon l\widetilde{y}.
		\end{array}
	\end{eqnarray*}
	Let~$[\bs{\pi}_r]_-$ be the smallest element in~$\bs{\pi}_r$. When the largest step-size,~$\ol{\alpha}$, satisfies
	\begin{align}\label{eb1}
	0<\ol{\alpha}<\min\left\{~\frac{\delta_1(1-\sigma)}{a_1\delta_1+a_2\delta_2+a_3\delta_3},~\frac{(1-\sigma)\delta_3-\delta_1a_6}{a_7\delta_1+a_8\delta_2+a_9\delta_3},~\frac{1}{nl}\right\},
	\end{align} 
	with positive constants~$\delta_1,\delta_2,\delta_3$ such that
	\begin{align}\label{eb2}
	\delta_3> 0,\qquad\delta_1 < \frac{(1-\sigma)\delta_3}{a_6}, \qquad 
	\delta_2 > \frac{a_4\delta_1+a_5\delta_3}{\mu n[\bs{\pi}_r]_-}, 
	\end{align}
	then the spectral radius of $J_{\bs{\alpha}}$ is strictly less than~1.
\end{theorem}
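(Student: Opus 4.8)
The plan is to establish the spectral radius bound $\rho(J_{\bs{\alpha}})<1$ by invoking Lemma~\ref{rho1}: I must exhibit a strictly positive vector $\bs{\delta}=[\delta_1,\delta_2,\delta_3]^\top$ and show that $J_{\bs{\alpha}}\bs{\delta}<\bs{\delta}$ componentwise, whenever $\ol{\alpha}$ satisfies the stated bound~\eqref{eb1} and $\delta_1,\delta_2,\delta_3$ satisfy~\eqref{eb2}. Note that $J_{\bs{\alpha}}$ is nonnegative (all its entries are nonnegative, since $\sigma,\lambda\in(0,1)$ and every $a_i>0$), so Lemma~\ref{rho1} applies directly. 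First, however, I must verify that the matrix $J_{\bs{\alpha}}$ indeed dominates the true LTI dynamics: I would assemble the three scalar bounds from Lemmas~\ref{1p},~\ref{2p},~\ref{3p}, substitute the bound on $\|\mb{z}_k\|_2$ from Lemma~\ref{4p} into the first and third rows (wherever the term $\ol{\alpha}d\epsilon\|\mb{z}_k\|_2$ or $\ol{\alpha}\epsilon\widetilde{y}ld\|\mb{z}_k\|_2$ appears), and collect the resulting coefficients. This substitution is what produces the entries of $J_{\bs{\alpha}}$ and $H_k$; in particular $a_1=cd\epsilon nl$, $a_2=d\epsilon nl$, $a_3=d^2\epsilon$ arise from feeding the Lemma~\ref{4p} bound into Lemma~\ref{1p}, and the analogous terms in row three come from Lemma~\ref{3p}. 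The residual $\sigma^k$-terms, which decay geometrically, are swept into $H_k\mb{s}_k$ rather than into the contraction matrix.

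\textbf{The core algebraic requirement.} The heart of the proof is the system of three strict inequalities $J_{\bs{\alpha}}\bs{\delta}<\bs{\delta}$. Writing these out, row one demands $(\sigma+a_1\ol{\alpha})\delta_1+a_2\ol{\alpha}\delta_2+a_3\ol{\alpha}\delta_3<\delta_1$, i.e. $\ol{\alpha}(a_1\delta_1+a_2\delta_2+a_3\delta_3)<(1-\sigma)\delta_1$, which rearranges exactly to the first term in the minimum in~\eqref{eb1}. Row three similarly gives $(a_6+a_7\ol{\alpha})\delta_1+a_8\ol{\alpha}\delta_2+(\sigma+a_9\ol{\alpha})\delta_3<\delta_3$, which rearranges to $\ol{\alpha}(a_7\delta_1+a_8\delta_2+a_9\delta_3)<(1-\sigma)\delta_3-a_6\delta_1$, matching the second term in~\eqref{eb1}. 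For the right-hand side here to be positive, I need $\delta_1<\frac{(1-\sigma)\delta_3}{a_6}$, which is precisely the middle condition in~\eqref{eb2}. Row two is the delicate one: it reads $a_4\ol{\alpha}\delta_1+\lambda\delta_2+a_5\ol{\alpha}\delta_3<\delta_2$, i.e. $\ol{\alpha}(a_4\delta_1+a_5\delta_3)<(1-\lambda)\delta_2$.

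\textbf{Handling the $\lambda$ row --- the main obstacle.} The difficulty in row two is that $\lambda=\max(|1-n\bs{\pi}_r^\top\bs{\alpha}\mu|,|1-n\bs{\pi}_r^\top\bs{\alpha}l|)$ depends on $\ol{\alpha}$ through $\bs{\pi}_r^\top\bs{\alpha}$, so $1-\lambda$ is not a fixed positive constant but itself shrinks with the step-sizes; a naive bound would let both sides go to zero and fail to close. The key observation I would use is that under $\bs{\pi}_r^\top\bs{\alpha}<\frac{2}{nl}$ one has $\lambda=1-n\bs{\pi}_r^\top\bs{\alpha}\mu$ for sufficiently small $\ol{\alpha}$ (since $\mu\le l$ makes the $\mu$-branch dominate), hence $1-\lambda=n\bs{\pi}_r^\top\bs{\alpha}\mu$. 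Crucially, this lower bounds as $1-\lambda\ge n[\bs{\pi}_r]_-\mu\,\ol{\alpha}$, because $\bs{\pi}_r^\top\bs{\alpha}\ge[\bs{\pi}_r]_-\sum_i\alpha_i\ge[\bs{\pi}_r]_-\ol{\alpha}$ using that $\ol{\alpha}=\max_i\alpha_i$ is one of the summands. Substituting this lower bound, row two's inequality is implied by $\ol{\alpha}(a_4\delta_1+a_5\delta_3)<n[\bs{\pi}_r]_-\mu\,\ol{\alpha}\,\delta_2$, and the factor $\ol{\alpha}$ cancels on both sides, leaving the step-size-free condition $a_4\delta_1+a_5\delta_3<\mu n[\bs{\pi}_r]_-\delta_2$, which is exactly the third condition in~\eqref{eb2}. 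This cancellation is the crux: it is why row two contributes a constraint on $\delta_2$ rather than a bound on $\ol{\alpha}$, and why the final step-size bound in~\eqref{eb1} involves only the minimum of the two ratios from rows one and three together with $\frac{1}{nl}$ (the latter enforcing both $\bs{\pi}_r^\top\bs{\alpha}<\frac{2}{nl}$ via $\bs{\pi}_r^\top\bs{\alpha}\le\ol{\alpha}$ and the regime where the $\mu$-branch of $\lambda$ is active). Assembling all three verified inequalities and applying Lemma~\ref{rho1} with $\mb{x}=\bs{\delta}$ and $\omega=1$ then yields $\rho(J_{\bs{\alpha}})<1$, completing the proof.
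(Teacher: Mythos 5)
Your proposal is correct and follows essentially the same route as the paper: combine Lemmas~\ref{1p}--\ref{4p} to obtain the LTI inequality, then apply Lemma~\ref{rho1} by exhibiting $\bs{\delta}>0$ with $J_{\bs{\alpha}}\bs{\delta}<\bs{\delta}$, handling the middle row exactly as the paper does via $1-\lambda=\mu n\bs{\pi}_r^\top\bs{\alpha}\geq \mu n[\bs{\pi}_r]_-\ol{\alpha}$ so that $\ol{\alpha}$ cancels and row two becomes a constraint on $\delta_2$ alone. The identification of where each condition in~\eqref{eb1} and~\eqref{eb2} originates (rows one and three for the step-size bounds, row three's positivity for the $\delta_1$ constraint, row two for $\delta_2$) matches the paper's derivation.
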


\begin{proof}
	Combining Lemmas~\ref{1p}--\ref{4p}, one can verify that Eq.~\eqref{JtHs} holds if~$\bs{\pi}_r^\top\bs{\alpha}<\frac{2}{nl}$. Recall that~$\lambda=\max\left(\left|1-\mu n\bs{\pi}_r^\top\bs{\alpha}\right|,\left|1-l n\bs{\pi}_r^\top\bs{\alpha} \right|\right)$. When~$\bs{\pi}_r^\top\bs{\alpha}<\frac{1}{nl}$,~$\lambda=1-\mu n\bs{\pi}_r^\top\bs{\alpha}$, since $\mu\leq l$~\cite{nesterov2013introductory}. In order to make~$\bs{\pi}_r^\top\bs{\alpha}<\frac{1}{nl}$ hold, it is suffice to require~$\ol{\alpha}<\frac{1}{nl}$. The next step is to find an upper bound,~$\hat{\alpha}$, on the largest step-size such that~$\rho(J_{\bs{\alpha}})<1$ when~$\ol{\alpha}<\hat{\alpha}.$ In the light of Lemma~\ref{rho1}, we solve for the range of the largest step-size,~$\ol{\alpha}$, and a positive vector~$\bs{\delta}=\left[\delta_1,\delta_2,\delta_3\right]^\top$ from the following:
	\begin{align}\label{eta1}
	\left[
	\begin{array}{ccc}
	\sigma+a_1\ol{\alpha} & a_2\ol{\alpha} &a_3\ol{\alpha} \\
	a_4\ol{\alpha} & 1-\mu n(\bs{\pi}_r^\top\bs{\alpha}) & a_5\ol{\alpha}\\
	a_6+a_7\ol{\alpha}& a_8\ol{\alpha} & \sigma+a_{9}\ol{\alpha}
	\end{array}
	\right]
	\left[
	\begin{array}{ccc}
	\delta_1 \\
	\delta_2\\
	\delta_3
	\end{array}
	\right]
	<
	\left[
	\begin{array}{ccc}
	\delta_1 \\
	\delta_2\\
	\delta_3
	\end{array}
	\right],
	\end{align}
	which is equivalent to the following set of inequalities: 
	\begin{align}\label{delt}
	\left\{
	\begin{array}{rll}
	(a_1\delta_1+a_2\delta_2+a_3\delta_3)\ol{\alpha}&<&\delta_1(1-\sigma), \\
	(a_4\delta_1+a_5\delta_3)\ol{\alpha}-\delta_2\mu n\bs{\pi}_r^\top\bs{\alpha}&<&0, \\
	(a_7\delta_1+a_8\delta_2+a_9\delta_3)\ol{\alpha}&<&(1-\sigma)\delta_3-\delta_1a_6.
	\end{array} 
	\right.
	\end{align}
	Since the right hand side of the third inequality in Eqs.~\eqref{delt} has to be positive, we have that: 
	\begin{equation}\label{e1}
	0<\delta_1 < \frac{(1-\sigma)\delta_3}{a_6}.
	\end{equation}
	In order to find the range of~$\delta_2$ such that the second inequality holds, it suffices to solve for the range of~$\delta_2$ such that the following inequality holds:
	\begin{align*}
	(a_4\delta_1+a_5\delta_3)\ol{\alpha}-\delta_2\mu n[\bs{\pi}_r]_-\ol{\alpha}<0,
	\end{align*}
	where~$[\bs{\pi}_r]_-$ is the smallest entry in~$\bs{\pi}_r$.
	Therefore, as long as 
	\begin{equation}\label{e2}
	\delta_2 > \frac{a_4\delta_1+a_5\delta_3}{\mu n[\bs{\pi}_r]_-},
	\end{equation}
	the second inequality in Eqs.~\eqref{delt} holds. The next step is to solve the range of~$\ol{\alpha}$ from the first and third inequalities in Eqs.~\eqref{delt}. We get
	\begin{align*}
	\ol{\alpha} < \min\left\{~\frac{\delta_1(1-\sigma)}{a_1\delta_1+a_2\delta_2+a_3\delta_3},~\frac{(1-\sigma)\delta_3-\delta_1a_6}{a_7\delta_1+a_8\delta_2+a_9\delta_3}~\right\},
	\end{align*}
	where the range of~$\delta_1$ and~$\delta_2$ is given in~Eq.~\eqref{e1} and Eq.~\eqref{e2}, respectively, and~$\delta_3$ is an arbitrary positive constant and the theorem follows. 
\end{proof}
{\color{black}Note that~$\delta_1,\delta_2,\delta_3$ are essentially adjustable parameters that are chosen independently from the step-sizes. Specifically, according to Eq.~\eqref{eb2}, we first choose an arbitrary positive constant~$\delta_3$ and subsequently choose a constant~$\delta_1$ such that~$0< \delta_1 < \frac{(1-\sigma)\delta_3}{a_6}$ and finally we choose a constant~$\delta_2$ such that $\delta_2>\frac{a_4\delta_1+a_5\delta_3}{\mu n[\bs{\pi}_r]_-}$.}
\begin{theorem}\label{thm1}
	If the largest step-size~$\ol{\alpha}$ follows the bound in Eq.~\eqref{eb1}, we have:
	\begin{equation*}
	\left\|\mb{x}_k-\mb{1}_n\otimes \mb{x}^*\right\|\leq m\big(\max\left\{\rho\left(J_{\bs{\alpha}}\right),\sigma\right\}+\xi\big)^k,
	\end{equation*}
	where~$\xi$ is an arbitrarily small constant,~$\sigma$ is the contraction factor defined in Lemma~\ref{row_ctrac} and~$m$ is some positive constant.
\end{theorem}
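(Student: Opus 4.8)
The plan is to convert the LTI system inequality from Theorem~\ref{thm0} into an explicit geometric bound on the state vector~$\mb{t}_k$, and then relate~$\|\mb{x}_k - \mb{1}_n\otimes\mb{x}^*\|$ to the entries of~$\mb{t}_k$. Theorem~\ref{thm0} already gives us~$\mb{t}_{k+1}\leq J_{\bs{\alpha}}\mb{t}_k + H_k\mb{s}_k$ with~$\rho(J_{\bs{\alpha}})<1$ under the step-size bound~\eqref{eb1}, where the entrywise inequality is preserved because~$J_{\bs{\alpha}}$ and~$H_k$ are nonnegative. Iterating this recursion yields
\begin{equation*}
\mb{t}_k \leq J_{\bs{\alpha}}^k\mb{t}_0 + \sum_{j=0}^{k-1}J_{\bs{\alpha}}^{k-1-j}H_j\mb{s}_j,
\end{equation*}
so the two tasks are to control~$\|J_{\bs{\alpha}}^k\mb{t}_0\|$ and the convolution sum, both of which decay geometrically.

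\textbf{First} I would handle the homogeneous term: since~$\rho(J_{\bs{\alpha}})<1$, for any~$\xi>0$ there is a constant such that~$\|J_{\bs{\alpha}}^k\|_2\leq C_1(\rho(J_{\bs{\alpha}})+\xi)^k$ (Gelfand's formula / equivalence of the spectral radius with the limit of operator-norm powers). \textbf{Next}, for the driving term, the key observation is that~$H_j = \widehat{H}\sigma^j$ for a fixed nonnegative matrix~$\widehat{H}$, and~$\mb{s}_j$ has only one nonzero entry,~$\|\nabla\mb{f}(\mb{x}_j)\|_2$. I would bound this gradient using Lipschitz continuity and the optimality condition~$(\mb{1}_n^\top\otimes I_p)\nabla\mb{f}(\mb{x}^*)=0$, writing~$\|\nabla\mb{f}(\mb{x}_j)\|_2\leq l\|\mb{x}_j-\mb{1}_n\otimes\mb{x}^*\|_2 + \|\nabla\mb{f}(\mb{1}_n\otimes\mb{x}^*)\|_2$, and then dominating~$\|\mb{x}_j-\mb{1}_n\otimes\mb{x}^*\|_2$ by a constant times~$\mb{t}_j$'s first two entries (via the triangle inequality~$\|\mb{x}_j-\mb{1}_n\otimes\mb{x}^*\|_2\leq c\|\mb{x}_j-Y_\infty\mb{x}_j\| + \|Y_\infty\mb{x}_j-\mb{1}_n\otimes\mb{x}^*\|_2$). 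This folds the forcing term back into a bound in terms of~$\|\mb{t}_j\|$, so that~$\|\mb{s}_j\|_2\leq C_2\|\mb{t}_j\| + C_3$ for constants coming from the residual gradient at the optimum.

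\textbf{The convolution sum} is where the two geometric rates~$\rho(J_{\bs{\alpha}})$ and~$\sigma$ interact, and I expect this to be the main technical obstacle. The standard trick is to use the elementary estimate that the convolution of two geometric sequences with rates~$\rho_1,\rho_2$ is bounded by~$C\,k\,(\max\{\rho_1,\rho_2\})^k$, and then absorb the polynomial factor~$k$ into an arbitrarily slightly larger base by paying the~$\xi$ slack: for any~$\xi>0$, $k(\max\{\rho(J_{\bs{\alpha}}),\sigma\})^k\leq C_4(\max\{\rho(J_{\bs{\alpha}}),\sigma\}+\xi)^k$ for all~$k$. This is precisely the role of the arbitrarily small~$\xi$ in the statement, and it also covers the degenerate case where~$\rho(J_{\bs{\alpha}})$ and~$\sigma$ coincide.

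\textbf{Finally} I would close the loop. The coupling introduced in the previous step means the bound on~$\mb{t}_k$ depends on earlier~$\mb{t}_j$; to make this rigorous without circularity I would either set up a combined recursion (appending the forcing coefficient~$C_2\widehat{H}\sigma^j$ into an enlarged system matrix and checking its spectral radius is still governed by~$\max\{\rho(J_{\bs{\alpha}}),\sigma\}$ for small~$\overline{\alpha}$) or, more cleanly, invoke a discrete Gronwall / comparison argument to conclude~$\|\mb{t}_k\|\leq m(\max\{\rho(J_{\bs{\alpha}}),\sigma\}+\xi)^k$. Once~$\|\mb{t}_k\|$ decays at this rate, I recover the theorem by the same triangle inequality as above, $\|\mb{x}_k-\mb{1}_n\otimes\mb{x}^*\|\leq c\|\mb{x}_k-Y_\infty\mb{x}_k\| + \|Y_\infty\mb{x}_k-\mb{1}_n\otimes\mb{x}^*\|_2$, both terms of which are components of~$\mb{t}_k$, giving the final constant~$m$ after adjusting for the norm-equivalence factors.
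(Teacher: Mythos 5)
Your proposal is correct and takes essentially the same route the paper intends: the paper's own ``proof'' is only the two-sentence remark that~$\rho(J_{\bs{\alpha}})<1$ and that~$H_k$ decays like~$\sigma^k$, deferring the rigorous details to the cited row-stochastic reference. Your sketch supplies exactly those details, and in particular you correctly isolate and resolve the one genuine subtlety the paper glosses over, namely that the forcing term~$\mb{s}_k$ depends on~$\mb{x}_k$ and hence feeds back into~$\mb{t}_k$, requiring an augmented-system or comparison argument before the geometric rate~$\max\{\rho(J_{\bs{\alpha}}),\sigma\}+\xi$ can be extracted.
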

Noticing that~$\rho\left(J_{\bs{\alpha}}\right)<1$ when the largest step-size,~$\ol{\alpha}$, follows the bound in Eq.~\eqref{eb1} and that~$H_k$ linearly decays at the rate of~$\sigma^k$, one can intuitively verify Theorem~\ref{thm1}. A rigorous proof follows from~\cite{linear_row}.

In Theorems~\ref{thm0} and~\ref{thm1}, we establish the linear convergence of FROST when the largest step-size,~$\ol{\alpha}$, follows the upper bound defined in Eqs.~\eqref{eb1}. {\color{black}Distributed optimization (based on gradient tracking)} with uncoordinated step-sizes have been previously studied in~\cite{xu2015augmented,xu2018convergence,nedic2017geometrically}, over undirected graphs with doubly-stochastic weights, and in~\cite{lu2018geometrical}, over directed graphs with column-stochastic weights. These works rely on some notion of heterogeneity of the step-sizes, defined respectively as the relative deviation of the step-sizes from their average,~$\frac{\|(I_n-U)\bs{\alpha}\|_2}{\|U\bs{\alpha}\|_2}$, where~$U=\mb{1}_n\mb{1}_n^\top/n$, in~\cite{xu2015augmented,xu2018convergence}, and as the ratio of the largest to the smallest step-size,~$\frac{\max_i\{\alpha_i\}}{\min_i\{\alpha_i\}}$, in~\cite{nedic2017geometrically,lu2018geometrical}. The authors then show that when the heterogeneity is small enough and when the largest step-size follows a bound that is a function of the heterogeneity, the proposed algorithms converge to the optimal solution. It is worth noting that sufficiently small step-sizes cannot guarantee sufficiently small heterogeneity in both of the aforementioned definitions. In contrast, the upper bound on the largest step-size in this paper, Eqs.~\eqref{eb1} and~\eqref{eb2}, is independent of any notion of heterogeneity and only depends on the objective functions and the network parameters\footnote{The constants~$\delta_1$,~$\delta_2$ and~$\delta_3$ in Eqs.~\eqref{eb1} and~\eqref{eb2} are tunable parameters that only depend on the network topology and objective functions.}. Each agent therefore locally picks a sufficiently small step-size independent of other step-sizes. Besides, this bound allows the agents to choose a zero step-size as long as at least one of them is positive and sufficiently small. 

\section{Numerical Results}\label{s6}
In this section, we use numerical experiments to support the theoretical results. We consider a distributed logistic regression problem. Each agent~$i$ has access to~$m_i$ training data,~$(\mb{c}_{ij},y_{ij})\in\mathbb{R}^p\times\{-1,+1\}$, where~$\mb{c}_{ij}$ contains~$p$ features of the~$j$th training data at agent~$i$ and~$y_{ij}$ is the corresponding binary label. The network of agents cooperatively solves the following distributed logistic regression problem:
\begin{align}
\underset{\mb{w}\in\mbb{R}^p,b\in\mathbb{R}}{\operatorname{min}}~F(\mb{w},b)=\sum_{i=1}^n\sum_{j=1}^{m_i}\ln\left[1+\exp\left(-\left(\mb{w}^\top\mb{c}_{ij}+b\right)y_{ij}\right)\right]+\frac{n\lambda}{2}\|\mb{w}\|_2^2\nonumber,
\end{align}
with each private loss function being
\begin{equation}
f_i(\mb{w},b)=\sum_{j=1}^{m_i}\ln\left[1+\exp\left(-\left(\mb{w}^\top\mb{c}_{ij}+b\right)y_{ij}\right)\right]+\frac{\lambda}{2}\|\mb{w}\|_2^2,
\end{equation}
where~$\frac{\lambda}{2}\|\mb{w}\|_2^2$ is a regularization term used to prevent overfitting of the data. The feature vectors,~$\mb{c}_{ij}$'s, are  randomly generated from some Gaussian distribution with zero mean. The binary labels are randomly generated from some Bernoulli distribution. The network topology is shown in Fig~\ref{graph}. 
	\begin{figure}[!h]
		\centering
		\subfigure{\includegraphics[width=2.7in]{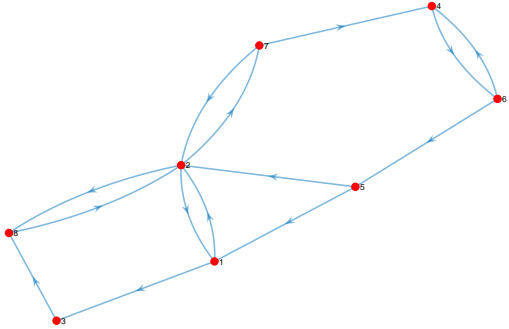}}
		\caption{A strongly-connected and unbalanced directed graph.}
		\label{graph}
	\end{figure}
	
We adopt a simple uniform weighting strategy to construct the row- and column-stochastic weights when needed:~$a_{ij}=1/|\mc{N}_i^{{\scriptsize \mbox{in}}}|,~b_{ij}=1/|\mc{N}_j^{{\scriptsize \mbox{out}}}|,~\forall i,j$. We plot the average of residuals at each agent,~$\frac{1}{n}\sum_{i=1}^{n}\|\mb{x}_i(k)-\mb{x}^*\|_2$. In Fig.~\ref{conv} (left), each curve represents the linear convergence of FROST when the corresponding agent uses a positive step-size, optimized manually, while every other agent uses zero step-size. In Fig.~\ref{conv} (right), we compare the performance of FROST, with ADD-OPT/Push-DIGing~\cite{xi2017add,diging}, see Section~\ref{saddopt}, and with the~$\mc{AB}$ algorithm in~\cite{AB,ABM}, see Section~\ref{slinear}. The step-size used in each algorithm is optimized. For FROST, we first manually find the optimal identical step-size for all agents, which is~$0.07$ in our experiment, and then randomly generate uncoordinated step-sizes of FROST from the uniform distribution over the interval~$[0,0.07]$ (therefore, the convergence speed of FROST shown in this experiment is conservative). The numerical experiments thus verify our theoretical finding that as long as the largest step-size of FROST is positive and sufficiently small, FROST linearly converges to the optimal solution.
	\begin{figure}[!h]
		\centering
		\subfigure{\includegraphics[width=6in]{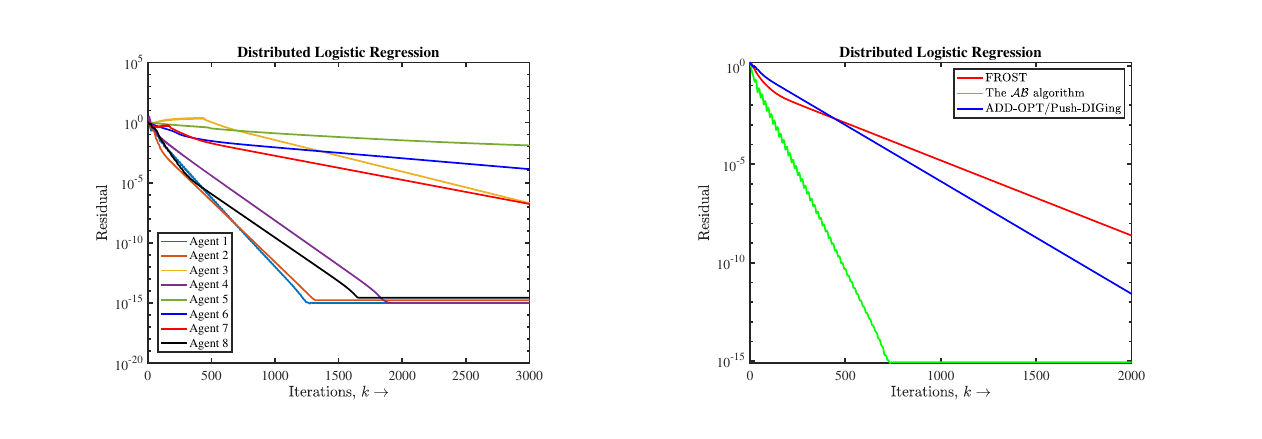}}
		\caption{(Left) Each curve represents the linear convergence of FROST when the corresponding agent uses a positive step-size, optimized manually, while every other agent uses zero step-size. (Right) Convergence comparison across different algorithms.}
		\label{conv}
	\end{figure}

In the next experiment, we show the influence of the network sparsity on the convergence of FROST. For this purpose, we use three different graphs each with~$n=50$ nodes, where~$\mc{G}_1$ has roughly~$10\%$ of total edges,~$\mc{G}_2$ has roughly~$13\%$ of total edges, and~$\mc{G}_3$ has roughly~$16\%$ of total edges. These graphs are shown in Fig.~\ref{G123} and the performance of FROST over each one of them is shown in Fig.~\ref{pG}. 
	
		\begin{figure}[!h]
			\centering
			\subfigure{\includegraphics[width=6.5in]{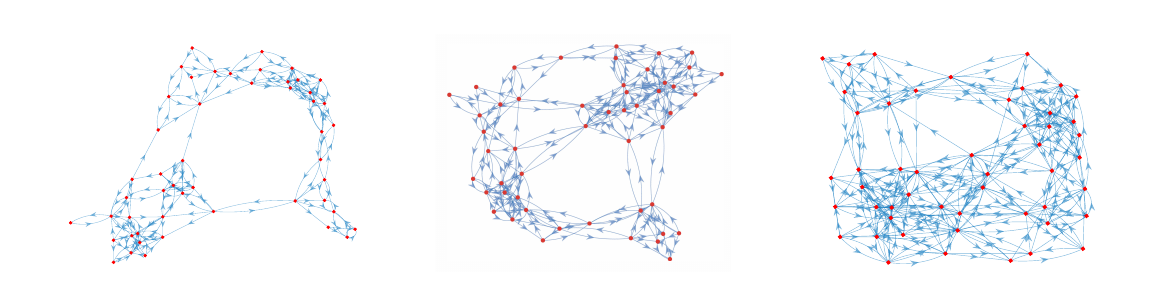}}
			\caption{Directed graphs with~$n=50$ nodes and increasing sparsity:~$\mc{G}_1,\mc{G}_2$, and~$\mc{G}_3$.}
			\label{G123}
		\end{figure}
		
		\begin{figure}[!h]
			\centering
			\includegraphics[width=2.7in]{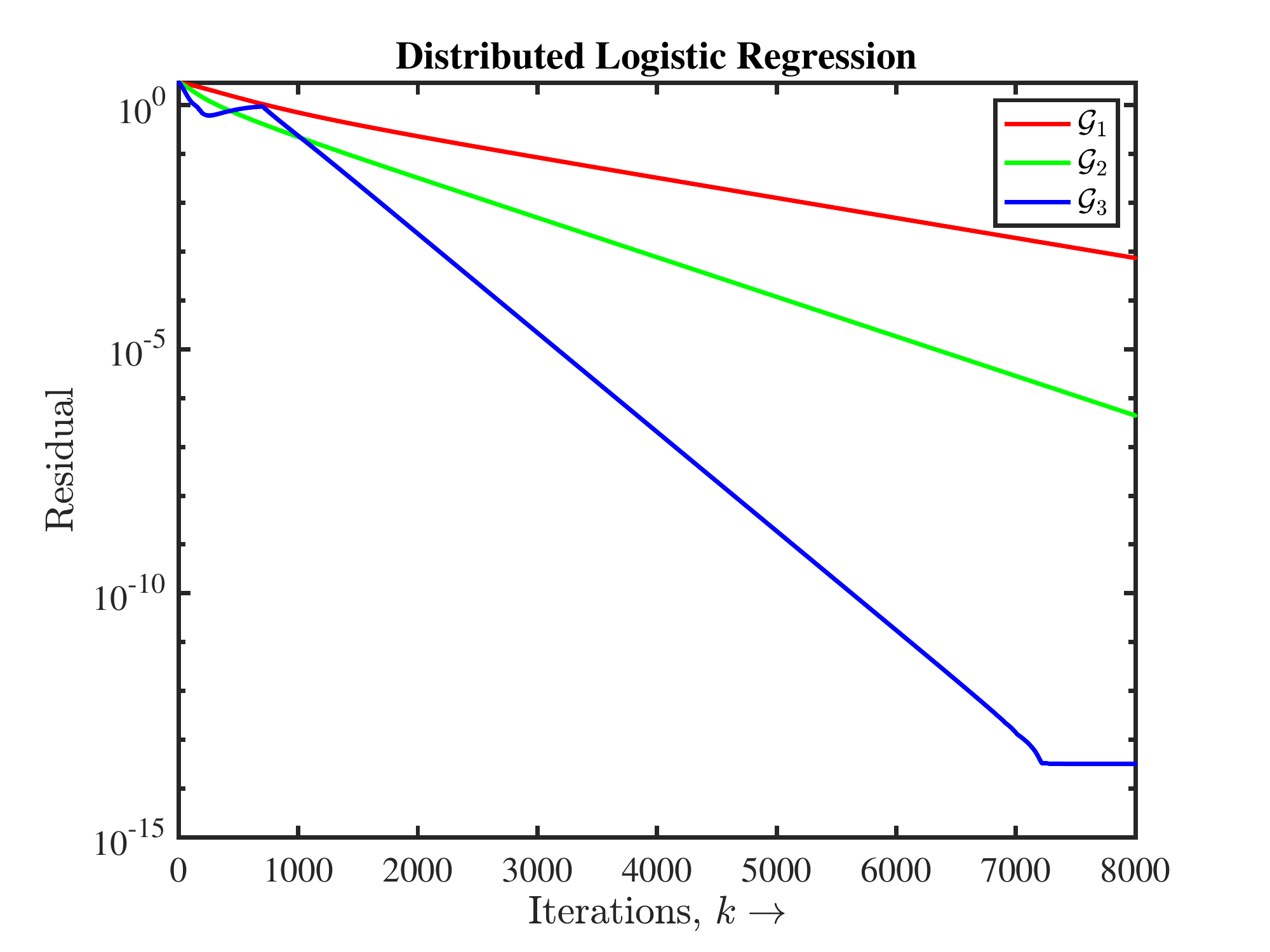}
			\caption{Influence of network sparsity on the performance of FROST.}
			\label{pG}
		\end{figure}

\section{Conclusions}\label{s7}
In this paper, we consider distributed optimization applicable to both directed and undirected graphs with row-stochastic weights and when the agents in the network have uncoordinated step-sizes. Most of the existing algorithms are based on column-stochastic weights, which may be infeasible to implement in many practical scenarios. Row-stochastic weights, on the other hand, are straightforward to implement as each agent locally determines the weights assigned to each incoming information. We propose a fast algorithm that we call FROST (Fast Row-stochastic Optimization with uncoordinated STep-sizes) and show that when the largest step-size is positive and sufficiently small, FROST linearly converges to the optimal solution. Simulation results further verify the theoretical analysis.

	\bibliographystyle{IEEEbib}
	\bibliography{sample,sample_2,ref,sample_ABM}

\end{document}